\numberwithin{equation}{section}
\newtheorem{theorem}{Theorem}[section]
\newtheorem{corollary}[theorem]{Corollary}
\newtheorem{proposition}[theorem]{Proposition}
\newtheorem{conjecture}[theorem]{Conjecture}
\newtheorem{lemma}[theorem]{Lemma}
\theoremstyle{definition}
\newtheorem{definition}[theorem]{Definition}
\newtheorem{example}[theorem]{Example}
\theoremstyle{remark}
\newtheorem{remark}[theorem]{\bf\em Remark}
\DeclareMathOperator{\lcm}{lcm}
\DeclareMathOperator{\ord}{ord}
\begin{document}
\title[Modular periodicity of exponential sums of symmetric Boolean functions]{Modular periodicity of exponential sums of symmetric Boolean functions and some of its consequences}

\author{Francis N. Castro}
\address{Department of Mathematics, University of Puerto Rico, San Juan, PR 00931}
\email{franciscastr@gmail.com}

\author{Luis A. Medina}
\address{Department of Mathematics, University of Puerto Rico, San Juan, PR 00931}
\email{luis.medina17@upr.edu}

\begin{abstract}
This work brings techniques from the theory of recurrent integer sequences to the problem of balancedness of symmetric Booleans functions.  In particular, the periodicity modulo $p$ ($p$ odd prime) of exponential sums of symmetric Boolean functions is considered.  Periods modulo $p$, bounds for periods and relations between them are obtained for these exponential sums.  The concept of avoiding primes is also introduced.  This concept and the bounds presented in this work are used to show that some classes of symmetric 
Boolean functions are not balanced.  In particular, every elementary symmetric Boolean function of degree not a power of 2 and less than 2048 is not balanced.  For instance, the 
elementary symmetric Boolean function in $n$ variables of degree $1292$ is not balanced because the prime $p = 176129$ does not divide its exponential sum for any positive 
integer $n$.  It is showed that for some symmetric Boolean functions, the set of primes avoided by the sequence of exponential sums contains a subset that has positive density within 
the set of primes.  Finally, in the last section, a brief study for the set of primes that divide some term of the sequence of exponential sums is presented.
\end{abstract}

\subjclass[2010]{05E05, 11T23, 11B50}
\date{\today}
\keywords{Symmetric Boolean functions, exponential sums, recurrences, Pisano periods}

\maketitle
\section{Introduction}
Boolean functions are beautiful combinatorial objects with applications to many areas of mathematics as well as outside the field.  Some examples include combinatorics, electrical engineering, game theory, 
the theory of error-correcting codes, and cryptography.   In the modern era, efficient implementations of Boolean functions with many variables is a challenging problem due to memory restrictions of current 
technology. Because of this, symmetric Boolean functions are good candidates for efficient implementations. 

In many applications, especially ones related to cryptography, it is important for Boolean functions to be balanced. Every symmetric function is a combination of elementary symmetric polynomials, thus an 
important step should be to understand the balancedness of them. In \cite{cusick2}, Cusick, Li and St$\check{\mbox{a}}$nic$\check{\mbox{a}}$ proposed a conjecture that explicitly states when an elementary 
symmetric function is balanced:\medskip

\noindent
{\it There are no nonlinear balanced elementary symmetric Boolean functions except for degree $k=2^{l}$ and $2^{l+1}D-1$-variables, where $l,D$ are positive integers. }\medskip

\noindent
Surprisingly, this conjecture is still open, but some advances have been made.  For some history of the problem, as well as its current state, the reader is invited to read \cite{cgm1, cm1, cm2, cusick2, 
cusick1, gao, ggz, pott}.

The subject of Boolean functions can be studied from the point of view of complexity theory or from the algebraic point of view as it is done in this article, where the periodicity of exponential sums of 
symmetric Boolean functions modulo a prime is exploited. The study of divisibility properties of Boolean functions is not new.  In fact, it is an active area of research \cite{sperber, ax, morenocastro, 
mm1, mm, mcsk}.  However, the authors believe that the modular periodicity of these functions has not been study in detail nor its possible connections to Cusick-Li-St$\check{\mbox{a}}$nic$\check{\mbox{a}}$'s 
conjecture.

In \cite{cm1}, Castro and Medina viewed exponential sums of symmetric Boolean functions as integer sequences.  As part of their study, they showed that these sequences satisfy homogenous linear recurrences 
with integer coefficients.  Moreover, in the case of one elementary symmetric function, they were able to provide the minimal homogenous linear recurrence.  It is a well-established result in number theory 
that recurrent integer sequences are periodic or eventually periodic modulo an integer $m$, with the first serious study being done by Lucas \cite{lucas}. Some of the results available on this topic are used 
in this manuscript to find bounds for the periods of these sequences.  These bounds and the periodicity of exponential sums of symmetric Boolean functions are used to show that some families are not balanced.
For example, every elementary symmetric Boolean function of degree not a power of 2 and less than 2048 is not balanced.  In particular, the elementary symmetric Boolean function in $n$ variables of degree 
1292 is not balanced because the prime $p=176129$ does not divide its exponential sum for any positive integer $n$.  One of the main goals of this manuscript is to provide some insights about the 
$p$-divisibility ($p$ prime) of the exponential sum of symmetric Boolean functions.

This work is divided in various parts.  It starts with some preliminaries (section \ref{prelim}) about symmetric Boolean functions.   It follows with a review of the periodicity modulo $m$ of linear 
recurrences (section \ref{periodicitysec}). This is done because, to the knowledge of the authors, it is not common to find the subjects of Boolean functions and periodicity modulo $m$ of recurrent 
sequences together in a manuscript. Section \ref{periodicitysec} also contains Theorem \ref{weakperiodthm} (Vince \cite{vince}), which is an important tool for finding upper bounds for the periods of the 
sequences considered in this article.  After that, in section \ref{boundsec}, the periodicity modulo $p$ ($p$ an odd prime) of exponential sums of symmetric Boolean functions is studied in more detail.  
In particular, the reader can find bounds and relations for these periods.  In section \ref{avoidprimes}, the concept of avoiding primes is introduced.  This concept and the bounds presented in section 
\ref{boundsec} are used to show that some of these families are not balanced.  Moreover, it is showed that for some symmetric Boolean functions, the set of primes avoided by the sequence of exponential 
sums has positive density within the set of primes.  Finally, in section \ref{sectionP}, a small study for the set of primes that divide some term of the sequence of exponential sums is presented.

\section{Preliminaries}
\label{prelim}
Let $\mathbb{F}_2$ be the binary field, $\mathbb{F}_2^{\,n} = \{(x_1,\ldots, x_n) | x_i \in \mathbb{F}_2, i = 1, . . . , n\}$, and $F(X) = F(X_1, \ldots,X_n)$ be a polynomial in $n$ variables over 
$\mathbb{F}_2$. The exponential sum associated to $F$ over $\mathbb{F}_2$ is
\begin{equation}
S(F)=\sum_{x_1,\ldots,x_n\in \mathbb{F}_2} (-1)^{F(x_1,\ldots,x_n)}.
\end{equation}
A Boolean function $F$ is called balanced if $S(F) = 0$, i.e. the number of zeros and the number of ones are equal in the truth table of $F$. This property is important for some applications in 
cryptography.

Any symmetric Boolean function is a linear combination of elementary symmetric polynomials.   Let $\sigma_{n,k}$ be the elementary symmetric polynomial in $n$ variables of degree $k$. For example,
\begin{equation}
\sigma_{4,3} = X_1 X_2 X_3+X_1 X_4 X_3+X_2 X_4 X_3+X_1 X_2 X_4.
\end{equation}
Then, every symmetric Boolean function can be identified with an expression of the form
\begin{equation}
\label{genboolsym}
\sigma_{n,k_1}+\sigma_{n,k_2}+\cdots+\sigma_{n,k_s},
\end{equation}
where $1\leq k_1<k_2<\cdots<k_s$ are integers.  For the sake of simplicity, the notation $\sigma_{n,[k_1,\cdots,k_s]}$ is used to denote (\ref{genboolsym}).  For example,
\begin{eqnarray}
\sigma_{3,[2,1]}&=&\sigma_{3,2}+\sigma_{3,1}\\ \nonumber
&=& X_1 X_2+X_3 X_2+X_1 X_3+X_1+X_2+X_3.
\end{eqnarray}
It is not hard to show that if $1\leq k_1 < k_2 < \cdots < k_s$ are fixed integers, then
\begin{equation}
\label{maingen}
S(\sigma_{n,[k_1,k_2,\cdots,k_s]}) =\sum_{j=0}^n (-1)^{\binom{j}{k_1}+\binom{j}{k_2}+\cdots+\binom{j}{k_s}}\binom{n}{j}.
\end{equation}
\begin{remark}
Observe that the right hand side of (\ref{maingen}) makes sense for $n\geq 1$, while the left hand side exists for $n\geq k_s$. Throughout the rest of the article, $S(\sigma_{n,[k_1,k_2,\cdots,k_s]})$ 
should be interpreted as the expression on the right hand side, so it makes sense to talk about ``exponential sums" of symmetric Boolean functions with less variables than their degrees. 
\end{remark}

Equation (\ref{maingen}) links the problem of balancedness of $\sigma_{n,[k_1,\cdots,k_s]}$ to the problem of bisecting binomial coefficients (this was first discussed by Mitchell \cite{mitchell}). 
A solution $(\delta_0,\delta_1,\cdots, \delta_n)$ to the equation
\begin{equation}
\label{bisec}
 \sum_{j=0}^n x_j \binom{n}{j}=0,\,\,\, x_j \in \{-1,1\},
\end{equation}
is said to give a {\em bisection of the binomial coefficients} $\binom{n}{j}$, $0\leq j \leq n.$  Observe that a solution to (\ref{bisec}) provides us with two disjoints sets $A,B$ such that 
$A\cup B =\{0,1,2,\cdots,n\}$ and
\begin{equation}
 \sum_{j \in A} \binom{n}{j}=\sum_{j\in B}\binom{n}{j}=2^{n-1}.
\end{equation}
If $n$ is even, then $\delta_j = \pm(-1)^j$, for $j=0,1,\cdots, n$, are two solutions to (\ref{bisec}).  On the other hand,
if $n$ is odd, then the symmetry of the binomial coefficients  implies that $(\delta_0,\cdots, \delta_{(n-1)/2},-\delta_{(n-1)/2},\cdots,-\delta_0)$  are $2^{(n+1)/2}$ solutions to (\ref{bisec}). These are called trivial solutions.  A balanced symmetric Boolean function in $n$ variables which corresponds to one of the trivial solutions of (\ref{bisec}) is said to be a {\it trivially balanced function}.  
Computations suggest that a majority of the balanced symmetric Boolean functions are trivially balanced, thus it is of great interest to find non-trivially balanced symmetric Boolean functions.  In 
the literature, these functions are called {\it sporadic} balanced symmetric Boolean functions, see \cite{cusick3, cusick2, jeff} for more information.

In \cite{cm1}, Castro and Medina used (\ref{maingen}) to study exponential sums of  symmetric polynomials from the point of view of integer sequences.  As part of their study, they showed that the sequence 
$\{S(\sigma_{n,[k_1,\cdots,k_s]})\}_{n\in \mathbb{N}}$ satisfies the homogeneous linear recurrence 
\begin{equation}
\label{mainrec}
x_n=\sum_{j=1}^{2^r-1}(-1)^{j-1}\binom{2^r}{j}x_{n-j},
\end{equation}
where $r=\lfloor\log_2(k_s)\rfloor+1$ (this result also follows from \cite[Th. 3.1, p. 248]{cai})  and used this result to compute the asymptotic behavior $S(\sigma_{n,[k_1,\cdots,k_s]})$ as $n\to \infty$.  
To be specific,
\begin{equation}
\label{asymplimit}
\lim_{n\to\infty}\frac{1}{2^n}S(\sigma_{n,[k_1,\cdots,k_s]}) = c_0(k_1,\cdots, k_s)
\end{equation}
where 
\begin{equation}
c_0(k_1,\cdots,k_s) = \frac{1}{2^r} \sum_{j=0}^{2^r-1}(-1)^{\binom{j}{k_1}+\cdots+\binom{j}{k_s}}.
\end{equation}
Limit (\ref{asymplimit}) gives rise to the concept of asymptotically balanced symmetric Boolean function, which was also introduced in \cite{cm1}.  A symmetric Boolean function $\sigma_{n,[k_1,\cdots,k_s]}$ 
is asymptotically balanced if $c_0(k_1,\cdots,k_s)=0.$  They used this concept to show that Cusick-Li-St$\check{\mbox{a}}$nic$\check{\mbox{a}}$'s conjecture is true asymptotically (this result was 
recently re-established in \cite{ggz}).    See \cite{cm1} for more details.

In \cite{cm2}, Castro and Medina extended many of the results presented in \cite{cm1} to some perturbations of symmetric Boolean functions.  Recall that $\sigma_{n,k}$ is the elementary symmetric polynomial 
of degree $k$ in the variables $X_1,\cdots, X_n$.  Suppose that $1\leq j<n$ and let $F({\bf X})$ be a binary polynomial in the variables $X_1,\cdots, X_j$ (the first $j$ variables in $X_1,\cdots, X_n$).  
Castro and Medina showed that the exponential sum of the perturbation $\sigma_{n,[k_1,\cdots,k_s]}+F({\bf X})$ satisfies the following relation
\begin{equation}
\label{perturbationeq}
S(\sigma_{n,[k_1,\cdots,k_s]}+F({\bf X}))=\sum_{m=0}^j C_m(F)S\left(\sum_{i=0}^m\binom{m}{i}(\sigma_{n-j,[k_1-i,\cdots,k_s-i}])\right),
\end{equation}
where
\begin{equation}
C_m(F)=\sum_{{\bf x}\in \mathbb{F}_2 \text{ with }w_2({\bf x})=m}(-1)^{F({\bf x})}.
\end{equation}
\begin{remark}
There are three things to observe about equation (\ref{perturbationeq}). First, it is clear that the value of $\binom{m}{i}$ that is inside the exponential sum can be taken mod 2, since only the parity 
matters.  Second, if $k_l-i <0$, then the term $\sigma_{n-j,k_l-i}$ does not exist and so it is not present in the equation.  Finally, in the case that $k_l-i=0$, the elementary polynomial $\sigma_{n-j,0}$ 
should be interpreted as 1.
\end{remark}

Note that equation (\ref{perturbationeq}) implies that these type of perturbations also satisfy recurrence (\ref{mainrec}) and therefore many of the results presented in this article about the periodicity 
modulo a prime of exponential sums of symmetric Boolean functions also apply to them.  In fact, later in section \ref{avoidprimes} these perturbations are examined in the context of sequences avoiding 
primes.

The periodicity modulo an integer of a recurrent sequence is closely related to its characteristic polynomial (as expected).  The main focus of this manuscript is the periodicity modulo $p$ of exponential 
sums of symmetric Boolean functions, which, as mentioned before, satisfy recurrence (\ref{mainrec}).  Observe that the characteristic polynomial of (\ref{mainrec}) is given by
\begin{equation}
(t-2)\Phi_4(t-1)\Phi_8(t-1)\cdots\Phi_{2^r}(t-1),
\end{equation}
where $\Phi_n(t)$ represents the $n$-th cyclotomic polynomial.  This factorization is crucial and it is used in section \ref{boundsec} to find bounds for the Pisano period of 
\begin{equation}
\label{seqtoconsider}
\{S(\sigma_{n,[k_1,\cdots,k_s]})\mod p\}_{n\in \mathbb{N}},
\end{equation}
when $p$ is a prime.   This factorization also implies that sequence (\ref{seqtoconsider}) is periodic for every odd prime $p$.

Naturally, a sequence may satisfy (\ref{mainrec}) but have a minimal linear recurrence with integer coefficients different from it.  In the case of the elementary symmetric polynomial, Castro and Medina 
were able to find the minimal homogenous linear recurrence with integer coefficients that $\{S(\sigma_{n,k})\}$ satisfies.  To be specific, let $\epsilon(n)$ be defined as
\begin{equation}
\epsilon(n)=\left\{\begin{array}{cl}
    0, & \text{if }n \text{ is a power of 2,}  \\
    1, & \text{otherwise.}
       \end{array}\right.
\end{equation}
Then, the following result holds.
\begin{theorem}
\label{charpolyn}
Let $k$ be a natural number and $\chi_k(t)$ be the characteristic polynomial associated to the minimal linear recurrence with integer coefficients that $\{S(\sigma_{n,k})\}_{n\in\mathbb{N}}$ satisfies.  
Let $\bar{k}=2\lfloor k/2\rfloor+1$.  Express $\bar{k}$ as its $2$-adic expansion 
\begin{equation}
\bar{k}=1+2^{a_1}+2^{a_2}+\cdots+2^{a_l},
\end{equation}
where the last exponent is given by $a_l=\lfloor \log_2(\bar{k})\rfloor.$  Then,
\begin{equation}
\chi_k(t)=(t-2)^{\epsilon(k)}\prod_{j=1}^l \Phi_{2^{a_j+1}}(t-1).
\end{equation}
In particular, the degree of the minimal linear recurrence that $\{S(\sigma_{n,k})\}_{n\in\mathbb{N}}$ satisfies is equal to $2\lfloor k/2 \rfloor + \epsilon(k)$.
\end{theorem}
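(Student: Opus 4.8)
The plan is to pass from the coefficient sequence $c_j=(-1)^{\binom jk}$ to a clean closed form for $S(\sigma_{n,k})$ as a sum of powers, and then read off the minimal characteristic polynomial from the bases that genuinely occur. Set $r=\lfloor\log_2 k\rfloor+1$ and $\omega=e^{2\pi i/2^r}$. By Lucas' theorem $\binom jk\bmod 2$ depends only on the bits of $j$ in positions $0,\dots,r-1$, so $c_j$ is periodic of period dividing $2^r$; writing its finite Fourier expansion $c_j=\sum_{m=0}^{2^r-1}\hat c_m\,\omega^{mj}$ with $\hat c_m=2^{-r}\sum_{j=0}^{2^r-1}c_j\omega^{-mj}$, and using $\sum_{j=0}^n\binom nj\omega^{mj}=(1+\omega^m)^n$, I would obtain
\begin{equation}
S(\sigma_{n,k})=\sum_{m=0}^{2^r-1}\hat c_m\,(1+\omega^m)^n .
\end{equation}

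Next I would argue that this expansion already \emph{is} the eigenvalue decomposition of the minimal recurrence. The $2^r$ numbers $1+\omega^m$ are pairwise distinct, so the sequences $\{(1+\omega^m)^n\}$ are linearly independent; hence the minimal integer recurrence has characteristic polynomial $\prod_{m\in T}\bigl(t-(1+\omega^m)\bigr)$, where $T$ collects the indices $m$ with $\hat c_m\neq0$ except that the $m$ with $\omega^m=-1$ is dropped, since then $1+\omega^m=0$ and $0^n=0$ for $n\ge1$. Because $c_j\in\mathbb Z$, the coefficients $\hat c_m$ attached to the primitive $2^a$-th roots of unity are Galois conjugate and vanish (or not) simultaneously; grouping the surviving $m$ by the order $2^a$ of $\omega^m$ converts $\prod(t-(1+\omega^m))$ over each orbit into the integer factor $\Phi_{2^a}(t-1)$, and the index $m=0$ (order $1$) contributes the factor $t-2$. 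Thus everything reduces to deciding, for each $a$, whether the order-$2^a$ Fourier coefficients vanish.

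This last point is the heart of the argument, and the main computation. Writing $S=\mathrm{supp}(k)\subseteq\{0,\dots,r-1\}$ and using $c_j=1-2\,[\,k\preceq j\,]$ (Lucas again), every $j$ with $k\preceq j$ is $k$ plus an arbitrary subset-sum of $\{2^i:i\notin S\}$, so for $m\neq0$
\begin{equation}
\hat c_m=-2^{\,1-r}\,\omega^{-mk}\prod_{i\in\{0,\dots,r-1\}\setminus S}\bigl(1+\omega^{-m2^i}\bigr).
\end{equation}
Hence $\hat c_m=0$ exactly when $\omega^{m2^i}=-1$ for some $i\notin S$, and a $2$-adic valuation count shows $\omega^{m2^i}=-1$ iff $i=r-1-v_2(m)$. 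Setting $a=r-v_2(m)$ (so that $\omega^m$ has order $2^a$), this says the order-$2^a$ coefficients survive iff bit $a-1$ of $k$ equals $1$; discarding $a=1$ (the dropped base $0$) leaves the factors $\prod_{b\in S,\;b\ge1}\Phi_{2^{b+1}}(t-1)$. For $m=0$ one finds $\hat c_0=1-2^{1-|S|}$, which is nonzero iff $k$ is not a power of $2$, giving $(t-2)^{\epsilon(k)}$. Finally, since $k$ and $\bar k$ have the same bits in all positions $\ge1$, the indices $\{b\in S:b\ge1\}$ coincide with $\{a_1,\dots,a_l\}$, so the product is $\prod_{j=1}^l\Phi_{2^{a_j+1}}(t-1)$, as claimed; the degree formula then follows from $\varphi(2^{a_j+1})=2^{a_j}$ and $\sum_j 2^{a_j}=\bar k-1=2\lfloor k/2\rfloor$.

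The main obstacle I anticipate is precisely the vanishing analysis of the product in the middle display: one must show that for each nonzero $m$ at most one factor $1+\omega^{-m2^i}$ can vanish and pin down which, so that the survival of an entire Galois orbit is controlled by a single bit of $k$. Once that valuation bookkeeping is done, the passage to cyclotomic factors, the treatment of the base $0$ and of $m=0$, and the degree count are all formal.
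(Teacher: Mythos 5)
Your proposal is correct, and there is nothing in this paper to contrast it with at the level of proof: Theorem \ref{charpolyn} is stated here without proof, imported from \cite{cm1}. Your route --- Lucas' theorem giving periodicity of $(-1)^{\binom{j}{k}}$ modulo $2^r$, the discrete Fourier expansion yielding $S(\sigma_{n,k})=\sum_{m}\hat c_m(1+\omega^m)^n$, and the explicit product formula deciding which $\hat c_m$ vanish --- is essentially the method of that reference; indeed your $\hat c_0=1-2^{1-|S|}$ is precisely the constant $c_0(k)$ that this paper quotes in its asymptotic formula \eqref{asymplimit}. The delicate points all check out: the valuation count showing that exactly one $i\in\{0,\dots,r-1\}$ (namely $i=r-1-v_2(m)$) can make $1+\omega^{-m2^i}$ vanish, the discarding of the base $1+\omega^{2^{r-1}}=0$ for $n\ge 1$, the grouping of each surviving Galois orbit into the integer factor $\Phi_{2^a}(t-1)$, and the power-of-two edge case where $\hat c_0=0$ matches $\epsilon(k)=0$.
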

\noindent

A generalization of Theorem \ref{charpolyn} for the case of a general symmetric Boolean function also appears in \cite{cm1}, but such generalization is not needed for the work presented in this article, 
thus the authors decided not to include it.   What is important, however, is the fact that if $\chi_{k_1,\cdots,k_s}(t)$ is the characteristic polynomial associated to the minimal homogenous linear 
recurrence that $\{S(\sigma_{n,[k_1,\cdots,k_s]})\}$ satisfies, then $\Phi_{2^r}(t-1)$, with $r=\lfloor \log_2(k_s) \rfloor+1$, is always a factor of $\chi_{k_1,\cdots,k_s}(t)$.  This is used in 
section \ref{boundsec} to find lower bounds for the period of (\ref{seqtoconsider}).


In the next section, a brief introduction to the periodicity modulo an integer of linear recurrences is presented. The expert reader may skip the majority of it, however he/she is encouraged to review 
Theorem \ref{weakperiodthm}, as it is a result used in later sections.  Later, in section \ref{boundsec}, the period mod $p$ of $\{S(\sigma_{n,[k_1,\cdots,k_s]})\}$ is studied in more detail.

\section{Periodicity mod $m$ of linear recurrences}
\label{periodicitysec}
As mentioned in the introduction, it is a well-established result that integer sequences satisfying homogenous linear recurrences with integer coefficients are periodic or eventually periodic modulo an 
integer $m$.   Below is a review of this fact.

Suppose that the integer sequence $\{x_n\}$ satisfies the linear recurrence 
\begin{equation}
x_n=a_1 x_{n-1}+a_2 x_{n-2}+\cdots+ a_r x_{n-r}, \,\, \text{ for }n\geq r,
\end{equation}
with $a_1,\cdots, a_r \in \mathbb{Z}$ and initial conditions $x_0,\cdots, x_{r-1}$.  Let 
\begin{equation}
\label{matrixeq}
A=\left(
\begin{array}{ccccc}
 0 & 1 &  0 & \cdots & 0 \\
 0 & 0 & 1  & \cdots & 0\\
 \vdots & \vdots & \vdots & \ddots & \vdots \\
 0 & 0 & 0 & \cdots & 1 \\
 a_r & a_{r-1} & a_{r-2}& \cdots & a_1
\end{array}
\right) \,\, \text{ and }\,\, 
X_n =\left(
\begin{array}{c}
x_{n}\\
x_{n+1}\\
\vdots\\
x_{n+r-1}
\end{array}
\right).
\end{equation}
It is clear that $X_{n+1}=AX_n$, thus iteration leads to 
\begin{equation}
\label{matrixiteration}
X_{n}=A^nX_0.
\end{equation}  
The $r\times r$ matrix $A$ is called the {\it companion matrix} of the recurrence. The vector $X_0$ is called the {\it initial valued vector}.  The initial valued vector is simply the vector whose entries 
are the initial conditions of the linear recurrence.  It is assumed throughout the article that $X_0$ is not the zero vector.

Equation (\ref{matrixiteration}) links the periodicity of $\{x_n \mod m\}$ to the periodicity of $\{A^n \mod m\}$.  Consider the list
\begin{equation}
\label{listtomod}
I, A, A^2, A^3,\cdots, A^n,\cdots
\end{equation}
where each $A^i$ has been reduced modulo $m$.  As a list, (\ref{listtomod}) is infinite.  However, when viewed as a set, it is finite because the set of all $r\times r$ matrices with entries from the 
group $\mathbb{Z}/m\mathbb{Z}$ is a finite set.  Therefore, the Pigeonhole Principle implies the existence of a pair of integers $n$ and $k$ such that 
\begin{equation}
\label{eventuallyperiodic}
A^{n+k} \equiv A^k \mod m
\end{equation} 
and $n+k>k\geq 0$.   Invoke the Well-Ordering Principle to conclude that $\{A^n \mod m\}$, and thus $\{x_n \mod m\}$, is eventually periodic.

In the case when $\det(A)$ is relatively prime to $m$, the matrix $A$ is invertible mod $m$.  In view of (\ref{eventuallyperiodic}), this means that there is a positive integer $n$ such that 
$A^n \equiv I \mod m$, which is equivalent to saying that the sequence $\{x_n \mod m\}$ is periodic.   The least possible $n$ such that 
\begin{equation}
\label{period}
x_{n+k}\equiv x_k \mod m
\end{equation} 
for all $k\geq 0$ is called the {\em Pisano period} of the sequence and it is usually denoted by $\pi(m)$ if the context of the sequence is clear.   Also, it is clear that if $n_0$ is the order of 
$A$ mod $m$, then  $n_0$ satisfies (\ref{period}).  Therefore, $\pi(m)$ divides the order of the companion matrix modulo $m$.  The order of $A$ mod $m$ is called the {\em weak Pisano period} and is 
denoted by $\pi^*(m)$.  

Continue with the case $\gcd(\det(A),m)=1$.  Suppose that $X_0$ is not the zero vector modulo $m$.  The least possible integer $n$ such that
\begin{equation}
\label{restricted}
X_{n+k} \equiv s X_k \mod m,
\end{equation}
for all $k\geq 0$ where $s$ is some integer, is called the {\it restricted period} of the sequence $\{x_n \mod m\}$.  The restricted period is usually denoted by $\alpha(m)$.  Observe that the existence 
of $\alpha(m)$ follows from the fact that the sequence is periodic.  To be specific, the periodicity of $\{x_n \mod m\}$ implies
\begin{equation}
\label{restrictedexists}
X_{\pi(m)+k} \equiv X_k \mod m
\end{equation}
for all $k\geq 0$.  Therefore, the set of positive integers satisfying (\ref{restricted}) is not empty ($\pi(m)$ belongs to this set).  The integer $0<s(m)<m$ such that
\begin{equation}
X_{\alpha(m)+k}\equiv s(m)X_k \mod m,
\end{equation}
for all $k\geq 0$ is called the {\it multiplier} of the sequence $\{x_n \mod m\}$.  

It is not hard to show that $X_{n+k}\equiv s X_k\mod m$, for all $k\geq 0$ and some integer $s$, if and only if $\alpha(m)\,|\, n$.  Therefore, not only does (\ref{restrictedexists}) imply the existence 
of $\alpha(m)$, it also implies that $\alpha(m)$ divides $\pi(m)$.  Write $\pi(m)=\alpha(m)l(m)$.  Observe that
\begin{equation}
X_0 \equiv X_{\pi(m)} \equiv X_{\alpha(m)l(m)} \equiv s(m)^{l(m)} X_0 \mod m.
\end{equation}
Thus, $s(m)$ is a unit modulo $m$.  The order of $s(m)$ is denoted by $\beta(m)$.  Note that
\begin{equation}
\label{betadivides}
\beta(m)\,|\, l(m)=\frac{\pi(m)}{\alpha(m)}.
\end{equation}
On the other hand, since $\beta(m)$ is the order of the multiplier modulo $m$, then
\begin{equation}
\label{pidivides}
X_{\alpha(m)\beta(m)+k}\equiv s(m)^{\beta(m)}X_k\equiv X_k \mod m,
\end{equation}
for all $k\geq 0$.  This implies that $\pi(m)$ divides $\alpha(m)\beta(m)$.  Together, (\ref{betadivides}) and (\ref{pidivides}) yield $\pi(m)=\alpha(m)\beta(m)$.

As it was the case for the Pisano period, one can define the corresponding {\it weak restricted period}.  The natural way to define it is as the least positive integer $n$ such that
\begin{equation}
A^n \equiv s I \mod m,
\end{equation}
for some integer $s$.  The weak restricted period is denoted by $\alpha^*(m)$.  The numbers $s^*(m)$ and $\beta^*(m)$ are defined in the usual sense.  It is not hard to show that
{ \renewcommand\labelenumi{(\theenumi)}
\begin{enumerate}
\item $\pi^*(m)=\alpha^*(m)\beta^*(m)$, 
\item $\alpha(m)\,|\,\alpha^*(m)$,
\item $\beta^*(m)\,|\,\beta(m)$.
\end{enumerate}}

The literature of Pisano periods and restricted periods is very fascinating and extensive.  The case of second order recurrences was intensively studied by Lucas \cite{lucas}.  Beautiful treatments for 
the Fibonacci sequence are presented in \cite{robinson, wall}. In \cite{somer},  Somer presents a thorough treatment for the second order Lucas sequence of the first kind.  In a short note \cite{robinson2}, 
Robinson considers the Pisano period modulo $m$ for higher order Lucas sequences.  Finally, in \cite{vince}, Vince considers Pisano periods in the more general setting of a number field $K$, its ring of 
integers $A$, and $\mathfrak{a}$ an ideal of $A$.

\begin{remark}
Let $\pi(m)$ represents the Pisano period of the Fibonacci numbers modulo $m$.  In \cite{wall}, Wall posed the question of whether there is a prime $p$ for which $\pi(p^2) = \pi(p)$. This is still an open 
problem known as {\it Wall's question}.  A prime $p$ that satisfies 
$$F_{p-\left(\frac{p}{5}\right)}\equiv 0 \mod p^2,$$
where $F_n$ is the $n$-th Fibonacci number is known as a Wall-Sun-Sun prime. The existence of a Wall-Sun-Sun prime provides an affirmative answer to Wall's question.  In \cite{sun-sun}, Z. H. Sun and 
Z. W. Sun proved that if the first case of Fermat's last theorem was false for a prime $p$, then $p$ must be a Wall-Sun-Sun prime.  It is conjectured that there are infinitely many Wall-Sun-Sun primes.  
As of today, none has been found.  In 2007, R. McIntosh and E. Roettger \cite{mac} showed that if a Wall-Sun-Sun prime $p$ exists, then $p>2\times 10^{14}$.  In 2014, PrimeGrid showed that such a prime 
must satisfy $p>2.8 \times 10^{16}$.
\end{remark}

This manuscript considers the Pisano period of $\{S(\sigma_{n,[k_1\cdots,k_s]})\}$ modulo $p$ where $p$ is an odd prime.    Thus, from now on, let $p$  be a fixed prime such that $p$ does not divide 
$\det(A)$ where $A$ is the companion matrix of some recurrent integer sequence $\{x_n\}$.  The assumption on $p$ implies that the sequence $\{x_n \mod p\}$ is periodic instead of just eventually periodic.  

The Pisano period $\pi(p)$ and weak Pisano period $\pi^*(p)$ are closely related to the minimal polynomial of $A$ over $\mathbb{F}_p$.  In particular, A. Vince \cite{vince} proved the following result. 
\begin{theorem}
\label{weakperiodthm}
Let $\{x_n\}$ be an integer sequence satisfying a homogenous linear recurrence with integer coefficients.  Let $A$ be the companion matrix of $\{x_n\}$, $\mu_A(t)$ be the minimal polynomial of $A$ over 
$\mathbb{F}_p$ and 
\begin{equation}
\mu_A(t)=f_1(t)^{e_1}f_2(t)^{e_2}\cdots f_r(t)^{e_r}
\end{equation} 
be its irreducible factorization.  Let $\alpha_i$ be a root of $f_i(t)$ in some algebraically closed extension of $\mathbb{F}_p$ and $s_i=\lceil \log_p(e_i)\rceil$.  Then,
\begin{equation}
\pi^*(p)=\lcm(p^{s_1}\ord(\alpha_1),\cdots,p^{s_r}\ord(\alpha_r)).
\end{equation}
\end{theorem}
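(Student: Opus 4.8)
The plan is to reduce the whole statement to a divisibility condition on $t^n-1$ by the minimal polynomial, and then read off the answer prime-by-prime. First I would recall the standing hypothesis $p\nmid\det(A)$, which makes $A$ invertible modulo $p$, so $\pi^*(p)=\ord(A\bmod p)$ is well defined, and which is equivalent to $t\nmid\mu_A(t)$ (hence every $\alpha_i\neq 0$ and each $\ord(\alpha_i)$ makes sense). The elementary fact from linear algebra that drives everything is that for a polynomial $g$ one has $g(A)=0$ if and only if $\mu_A\mid g$. Applying this to $g(t)=t^n-1$ gives
\[
A^n\equiv I\pmod p\iff \mu_A(t)\mid\bigl(t^n-1\bigr)\ \text{in } \mathbb{F}_p[t],
\]
so $\pi^*(p)$ is exactly the least positive integer $n$ for which $\mu_A(t)\mid t^n-1$.

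Next I would analyze this divisibility by writing $n=p^sm$ with $\gcd(m,p)=1$. Using the Frobenius endomorphism of $\mathbb{F}_p[t]$ I get $t^n-1=(t^m)^{p^s}-1=(t^m-1)^{p^s}$, while $t^m-1$ is separable because $\gcd(m,p)=1$ forces its derivative $mt^{m-1}$ to be coprime to it. Consequently every irreducible factor of $t^m-1$ occurs there with multiplicity exactly $1$, hence with multiplicity exactly $p^s$ in $t^n-1$. Since $\mathbb{F}_p[t]$ is a unique factorization domain, matching the factorization $\mu_A=\prod_i f_i^{e_i}$ against $t^n-1$ factor by factor, the relation $\mu_A\mid t^n-1$ holds if and only if for every $i$ the irreducible $f_i$ divides $t^m-1$ \emph{and} the multiplicity bound is met, i.e. $e_i\le p^s$.

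I would then translate these two conditions into the quantities in the statement. The condition $f_i\mid t^m-1$ is equivalent to $\alpha_i^m=1$, i.e. $\ord(\alpha_i)\mid m$; the condition $e_i\le p^s$ is equivalent to $s\ge\log_p(e_i)$, i.e. (since $s\in\mathbb{Z}$) $s\ge s_i=\lceil\log_p(e_i)\rceil$. Thus $\mu_A\mid t^n-1$ if and only if $m$ is a common multiple of all the $\ord(\alpha_i)$ and $s\ge\max_i s_i$. Minimizing $n=p^sm$ under these constraints forces $m=\lcm_i\ord(\alpha_i)$ and $s=\max_i s_i$, yielding
\[
\pi^*(p)=p^{\max_i s_i}\,\lcm_i\ord(\alpha_i).
\]

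Finally I would identify this with the advertised least common multiple. Since each $\alpha_i$ lies in the multiplicative group of the finite field $\mathbb{F}_{p^{\deg f_i}}$, its order divides $p^{\deg f_i}-1$ and is therefore coprime to $p$. Hence $p^{s_i}$ and $\ord(\alpha_i)$ are coprime, so the $p$-part of $\lcm_i\bigl(p^{s_i}\ord(\alpha_i)\bigr)$ is $p^{\max_i s_i}$ and its prime-to-$p$ part is $\lcm_i\ord(\alpha_i)$; combining these gives $\lcm_i(p^{s_i}\ord(\alpha_i))=p^{\max_i s_i}\lcm_i\ord(\alpha_i)=\pi^*(p)$, as claimed. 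The one step demanding genuine care is verifying that the multiplicity of each $f_i$ in $t^n-1$ equals exactly $p^s$, which is precisely where the separability of $t^m-1$ and the Frobenius identity do the real work; everything else is bookkeeping with $p$-adic valuations and orders of roots of unity.
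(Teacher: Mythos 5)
Your proof is correct, and every step checks out: the reduction of $\pi^*(p)=\ord(A\bmod p)$ to the least $n$ with $\mu_A(t)\mid t^n-1$ in $\mathbb{F}_p[t]$, the Frobenius identity $t^{p^sm}-1=(t^m-1)^{p^s}$ together with the separability of $t^m-1$ for $\gcd(m,p)=1$, and the final bookkeeping showing $\lcm_i\bigl(p^{s_i}\ord(\alpha_i)\bigr)=p^{\max_i s_i}\lcm_i\ord(\alpha_i)$ (using that each $\ord(\alpha_i)$ divides $p^{\deg f_i}-1$, hence is prime to $p$). You also correctly invoked the standing hypothesis $p\nmid\det(A)$ to rule out $t\mid\mu_A(t)$, without which the constraint $f_i\mid t^m-1$ would be unsatisfiable.

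One point of comparison worth noting: the paper itself contains no proof of this statement — it is quoted from Vince's 1981 paper, where it is established in the more general setting of the ring of integers of a number field modulo an ideal. So there is no internal argument to measure yours against; what you have written is the standard self-contained derivation of the finite-field case, and it is exactly the argument one would expect (and essentially the specialization of Vince's). If anything, your version makes explicit the one step the citation hides, namely that the multiplicity of each irreducible factor of $t^n-1$ is \emph{exactly} $p^{v_p(n)}$, which is what forces the ceiling $s_i=\lceil\log_p(e_i)\rceil$ rather than some cruder bound. No gaps.
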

\noindent
The weak Pisano period will be re-labeled as $\pi^*(p; A)$ in order to make the dependence presented in Theorem \ref{weakperiodthm} more explicit.  As expected, the Pisano period $\pi(p)$, which 
already depends on $A$ (see  Theorem \ref{weakperiodthm}), also depends on the initial valued vector.  Because of this, the Pisano period will be re-labeled as $\pi(p;A;X_0)$.  It is known 
\cite{vince} that if the initial valued vector $X_0$ is a cyclic vector for $A$ over $\mathbb{F}_p$, then $\pi(p;A;X_0)=\pi^*(p;A)$.  However, if $X_0$ is not a cyclic vector, then it may 
happen that $\pi(p;A;X_0)<\pi^*(p;A)$. 

\begin{example}
Consider the sequence defined by
\begin{eqnarray}
x_n &=& 4 x_{n - 1} - 6 x_{n - 2} + 4 x_{n - 3} - 2 x_{n - 4}, \,\,\, \text{ for }n\geq 5,
\end{eqnarray}
with initial valued vector $X_0=(0,2,0,1)^T$. Let $A$ be its companion matrix.  Note that $\det(A)=2$, thus the primes 3 and 5 do not divide it. 

Consider the sequence $\{x_n \mod 3\}$.  The minimal polynomial of $A$ over $\mathbb{F}_3$ is $$\mu_{A,3}(t)=\left(t^2+1\right) \left(t^2+2 t+2\right).$$  However, note that the annihilator of 
$X_0$ is $\rho_{X_0}(t)=t^2+1$.  This is because $\rho_{X_0}(t)$ is irreducible over $\mathbb{F}_3$ and $\rho_{X_0}(A)X_0 = 0$.  Therefore, $X_0$ is not a cyclic vector for $A$ over $\mathbb{F}_3$.  
In this case, the Pisano period is $\pi(3;A;X_0)=4$, while the weak Pisano period is $\pi^*(3;A)=8$.

Consider now the sequence $\{x_n\mod 5\}$.  The minimal polynomial of $A$ over $\mathbb{F}_5$ is $$\mu_{A,5}(t)=\left(t^2+3 t+3\right) \left(t^2+3 t+4\right).$$  The vector $X_0$ is a cyclic vector 
of $A$ over $\mathbb{F}_5$ and thus, the Pisano period and weak Pisano period coincide.  In this case, $\pi(5;A;X_0)=\pi^*(5;A)=24$.
\end{example}

Theorem \ref{weakperiodthm} is one of the main tools used in this work.  In the next section, the periodicity of $\{S(\sigma_{n,[k_1,\cdots,k_s]})\mod p\}$ is considered.  In particular, 
Theorem \ref{weakperiodthm} is used to obtain upper bounds for the Pisano periods.   Later, in section \ref{avoidprimes}, these bounds are used to prove that some of the sequences 
$\{S(\sigma_{n,[k_1,\cdots,k_s]})\mod p\}$ are never zero, which implies that the corresponding symmetric functions are never balanced.


\section{Bounds and relations on the Pisano periods}
\label{boundsec}
Let $A(k_1,\cdots,k_s)$ and $\chi_{k_1,\cdots,k_s}(t)$ be the companion matrix and characteristic polynomial (resp.) associated to the minimal linear recurrence that $\{S(\sigma_{n,[k_1,\cdots,k_s]})\}$ 
satisfies.  It is known (see \cite{cm1}) that $\chi_{k_1,\cdots, k_s}(t)$ is a product (no repetitions) of terms among the list 
$$t-2, \Phi_4(t-1), \Phi_8(t-1), \cdots, \Phi_{2^r}(t-1).$$
Moreover, the term $\Phi_{2^r}(t-1)$ is always a factor of $\chi_{k_1,\cdots, k_s}(t)$.  This implies that $\det(A(k_1,\cdots,k_s))$ is a power of 2.  Therefore, if $p\neq 2$ is prime, then 
$$\{S(\sigma_{n,[k_1,\cdots,k_s]})\mod p\}$$ is periodic.  The Pisano period and weak Pisano period of this sequence are denoted by $\pi_{k_1,\cdots,k_s}(p)$ and $\pi^*_{k_1,\cdots,k_s}(p)$ respectively.

Consider the characteristic polynomial $\chi_{k_1,\cdots,k_s}(t)$ over $\mathbb{F}_p$.  It turns out that it coincides with the minimal polynomial of $A$ over $\mathbb{F}_p$.  To show this, first observe 
that $\gcd(t - 2, \Phi_{2^i} (t- 1)) = 1$.  Now, it is a well-established result that $\gcd(\Phi_m(t),\Phi_n(t)) = 1$ in $\mathbb{F}_p[t]$ for $m < n$ and neither $m$ nor $n$ is divisible by $p$. This 
implies that, for $i\neq j$, $\gcd(\Phi_{2^i} (t - 1), \Phi_{2^j} (t - 1)) = 1$ over $\mathbb{F}_p$. Thus, the claim holds because $\Phi_{2^i} (t)$ does not have repeated factors over $\mathbb{F}_p$.

This information and Theorem \ref{weakperiodthm} are all that is needed to have the following upper bound on the period modulo $p$.

\begin{theorem}
\label{mainboundthm}
Let $p>2$ be prime and let $1\leq k_1<\cdots < k_s$ be integers with $k_s>1$.  Let $r=\lfloor \log_2(k_s) \rfloor +1$.  Then,
\begin{equation}
\pi^*_{k_1,\cdots,k_s}(p)\,|\, p^{\ord_{2^r}(p)}-1,
\end{equation}
where $\ord_n(m)$ represent the order of $m$ modulo $n$. 
\end{theorem}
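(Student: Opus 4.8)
The plan is to combine Theorem~\ref{weakperiodthm} with the explicit description of the roots of $\chi_{k_1,\cdots,k_s}(t)$. The crucial preliminary fact, already established in the discussion preceding the statement, is that over $\mathbb{F}_p$ the minimal polynomial $\mu_A(t)$ of $A=A(k_1,\cdots,k_s)$ coincides with $\chi_{k_1,\cdots,k_s}(t)$ and is squarefree. Hence, in the irreducible factorization $\mu_A(t)=\prod_i f_i(t)^{e_i}$ over $\mathbb{F}_p$, every exponent is $e_i=1$, so that $s_i=\lceil \log_p(e_i)\rceil=0$ and $p^{s_i}=1$. Theorem~\ref{weakperiodthm} then collapses to
\begin{equation}
\pi^*_{k_1,\cdots,k_s}(p)=\lcm(\ord(\alpha_1),\cdots,\ord(\alpha_d)),
\end{equation}
where the $\alpha_i$ are roots of the irreducible factors of $\chi_{k_1,\cdots,k_s}(t)$ in $\overline{\mathbb{F}_p}$. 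It therefore suffices to bound the multiplicative order of every such root by $p^{\ord_{2^r}(p)}-1$.

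First I would describe the roots explicitly. Since $\chi_{k_1,\cdots,k_s}(t)$ is a product of some of the terms $t-2,\Phi_4(t-1),\cdots,\Phi_{2^r}(t-1)$, each $\alpha_i$ is either $2$ (from the factor $t-2$, when present) or of the form $1+\zeta$, where $\zeta$ is a primitive $2^i$-th root of unity for some $2\leq i\leq r$ (from a factor $\Phi_{2^i}(t-1)$). Next, set $D=\ord_{2^r}(p)$, so that $2^r\mid p^D-1$. Because $p$ is odd, the cyclic group $\mathbb{F}_{p^D}^{*}$, of order $p^D-1$, contains a unique subgroup of order $2^r$, namely the group of all $2^r$-th roots of unity; in particular it contains every $2^i$-th root of unity with $i\leq r$. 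Adding $1$ keeps these elements inside the field $\mathbb{F}_{p^D}$, and $2\in\mathbb{F}_p\subseteq\mathbb{F}_{p^D}$, so every root $\alpha_i$ lies in $\mathbb{F}_{p^D}$. Moreover each $\alpha_i$ is nonzero: one has $2\neq 0$ since $p$ is odd, and $1+\zeta\neq 0$ because $\zeta$ is a primitive $2^i$-th root of unity with $i\geq 2$, forcing $\zeta\neq -1$.

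Finally, since $\alpha_i\in\mathbb{F}_{p^D}^{*}$, a cyclic group of order $p^D-1$, Lagrange's theorem gives $\ord(\alpha_i)\mid p^D-1$ for every $i$. Passing to least common multiples yields
\begin{equation}
\pi^*_{k_1,\cdots,k_s}(p)=\lcm(\ord(\alpha_1),\cdots,\ord(\alpha_d))\ \big|\ p^D-1=p^{\ord_{2^r}(p)}-1,
\end{equation}
which is the desired bound. The argument has no genuinely hard step once the roots are identified; the only point requiring care is confirming that the shift by $1$ in the factors $\Phi_{2^i}(t-1)$ neither ejects a root from $\mathbb{F}_{p^D}$ nor creates a zero root, and both follow at once because $\mathbb{F}_{p^D}$ is a field containing $1$ and because $i\geq 2$. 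The real content of the proof is the recognition that the single field $\mathbb{F}_{p^{\ord_{2^r}(p)}}$ simultaneously houses all the relevant $2^i$-th roots of unity, which is exactly what makes a uniform order bound for every root available.
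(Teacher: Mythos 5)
Your proof is correct and follows essentially the same route as the paper: both invoke Theorem~\ref{weakperiodthm} (with the squarefreeness of $\chi_{k_1,\cdots,k_s}(t)$ over $\mathbb{F}_p$ eliminating the $p^{s_i}$ factors) and then locate every root of $\chi_{k_1,\cdots,k_s}(t)$ in the field $\mathbb{F}_{p^{\ord_{2^r}(p)}}$, so that all multiplicative orders divide $p^{\ord_{2^r}(p)}-1$. The only difference is presentational --- the paper builds the tower of splitting fields $K_i$ of the $\Phi_{2^{i+1}}(t-1)$ and identifies $K=K_{r-1}$ as the $2^r$-th cyclotomic extension, while you show containment directly via the $2^r$-th roots of unity in $\mathbb{F}_{p^D}^{\times}$, and you additionally verify that no root is zero, a point the paper leaves implicit.
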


\begin{proof}
Let $K_i/\mathbb{F}_p$ be the splitting field of $\Phi_{2^{i+1}}(t-1)$ and $K/\mathbb{F}_p$ the splitting field of $\chi_{k_1,\cdots, k_s}(t)$.  Then,
{ \renewcommand\labelenumi{(\theenumi)}
\begin{enumerate}
\item $K_i$ is the $2^{i+1}$ cyclotomic extension over $\mathbb{F}_p$, 
\item $[K_i, \mathbb{F}_p] = \text{ord}_{2^{i+1}} (p)$,
\item  $K_i \subseteq K_{i+1}$ for all $i$.
\end{enumerate}}
Therefore, $K = K_{r-1}$. Write $\chi_{k_1,\cdots, k_s}(t) = (t-2)(t-\alpha_1)\cdots(t-\alpha_w)$ over $K$. Theorem \ref{weakperiodthm} shows that 
$\pi^*_{k_1 ,\cdots ,k_s} (p)= \lcm(\ord(2), \ord(\alpha_1 ), \cdots , \ord(\alpha_w ))$. It is clear that $\lcm(\ord(2), \ord(\alpha_1), \cdots , \ord(\alpha_w))$ divides 
$|K^{\times}| = p^{\ord_{2^r} (p)} -1$. This concludes the proof.
\end{proof}

\begin{corollary}
\label{mainboundcoro}
Let $p\equiv 1  \mod 4$ be prime and let $1\leq k_1<\cdots < k_s$ be integers with $k_s>1$.  Suppose that $p=2^r b+1$ with $b$ odd and that $k_s<2^r$.  Then,
\begin{equation}
\pi^*_{k_1,\cdots,k_s}(p)\,|\, p-1.
\end{equation}
\end{corollary}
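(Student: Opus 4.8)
The plan is to derive this directly from Theorem \ref{mainboundthm}, the only genuine subtlety being that the symbol $r$ carries two different meanings: in the theorem it abbreviates $\lfloor\log_2(k_s)\rfloor+1$, whereas in the corollary it records the exact power of $2$ dividing $p-1$ (this is what the oddness of $b$ encodes). To keep the two apart I would write $r_0=\lfloor\log_2(k_s)\rfloor+1$ for the theorem's quantity and reserve $r$ for the corollary's. First I would observe that the hypothesis $k_s<2^r$ forces $r_0\le r$: indeed $k_s<2^r$ gives $\log_2(k_s)<r$, and since $r$ is an integer this yields $\lfloor\log_2(k_s)\rfloor\le r-1$, that is, $r_0\le r$.

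Next I would feed $r_0$ into Theorem \ref{mainboundthm}, obtaining
\begin{equation}
\pi^*_{k_1,\cdots,k_s}(p)\,\mid\, p^{\ord_{2^{r_0}}(p)}-1.
\end{equation}
It then suffices to show that the exponent collapses, i.e. that $\ord_{2^{r_0}}(p)=1$. This is exactly where the prescribed shape of $p$ enters: from $p=2^r b+1$ we get $p\equiv 1\bmod 2^r$, and since $r_0\le r$ we have $2^{r_0}\mid 2^r\mid(p-1)$, so $p\equiv 1\bmod 2^{r_0}$. Hence $\ord_{2^{r_0}}(p)=1$ and the bound above reads $\pi^*_{k_1,\cdots,k_s}(p)\mid p-1$, which is the claim.

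I would round this off with two consistency remarks rather than extra work. Since $k_s>1$ we have $r_0\ge 2$, whence $r\ge 2$ and $p\equiv 1\bmod 4$ automatically; thus the congruence hypothesis $p\equiv 1\bmod 4$ is forced by the remaining assumptions rather than being an independent constraint, and the oddness of $b$ is used only to pin $r$ down as the exact $2$-adic valuation of $p-1$. I do not expect a real obstacle here: the entire content is the inequality $r_0\le r$ together with the resulting fact that $p$ has order $1$ modulo $2^{r_0}$, so the corollary is a clean specialization of Theorem \ref{mainboundthm}; the only thing to watch carefully is the clash between the two meanings of $r$ and the direction of the inequality relating them.
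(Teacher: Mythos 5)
Your proof is correct and is essentially the paper's argument: the paper's one-line proof that $\mathbb{F}_p$ is the splitting field of the characteristic polynomial is exactly your statement that $\ord_{2^{r_0}}(p)=1$ (the splitting field is the $2^{r_0}$-th cyclotomic extension of $\mathbb{F}_p$, of degree $\ord_{2^{r_0}}(p)$), both then finishing via Theorem \ref{mainboundthm}. Your careful separation of the two meanings of $r$ and the remark that $p\equiv 1 \bmod 4$ is automatic are accurate but add nothing beyond the paper's route.
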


\begin{proof}
Note that $\mathbb{F}_p$ is the splitting field of the characteristic polynomial. This concludes the proof.
\end{proof}

\begin{corollary}
\label{boundcorop3}
Let $p\equiv 3 \mod 4$ be prime and let $1\leq k_1<\cdots < k_s$ be integers with $k_s>1$.  Suppose that $p^2-1=2^r b$ with $b$ odd and that $k_s<2^r$.  Then,
\begin{equation}
\pi^*_{k_1,\cdots,k_s}(p)\,|\, p^2-1.
\end{equation}
\end{corollary}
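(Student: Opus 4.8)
The plan is to deduce the bound directly from Theorem \ref{mainboundthm}, the only real work being to pin down the order of $p$ modulo the relevant power of $2$. The one genuine nuisance is a clash of notation: the exponent $r$ in Theorem \ref{mainboundthm} equals $\lfloor\log_2(k_s)\rfloor+1$, whereas the $r$ in the corollary is fixed instead by $p^2-1=2^rb$ with $b$ odd. To keep the two apart I would write $r_0=\lfloor\log_2(k_s)\rfloor+1$ for the quantity appearing in the theorem and reserve $r$ for the corollary's hypothesis. With this convention, Theorem \ref{mainboundthm} reads $\pi^*_{k_1,\cdots,k_s}(p)\mid p^{\ord_{2^{r_0}}(p)}-1$, so everything reduces to controlling $\ord_{2^{r_0}}(p)$.

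First I would record the inequality $r_0\le r$, which is where the hypothesis $k_s<2^r$ is spent. By definition of the floor we have $2^{r_0-1}\le k_s$, and the hypothesis gives $k_s<2^r$; combining these yields $2^{r_0-1}<2^r$, hence $r_0-1<r$, and therefore $r_0\le r$ since all quantities are integers. Next I would compute the order. Because $2^r\mid p^2-1$ and $r_0\le r$, we get $2^{r_0}\mid p^2-1$, that is, $p^2\equiv 1\pmod{2^{r_0}}$, and therefore $\ord_{2^{r_0}}(p)\mid 2$. (Here the hypothesis $p\equiv 3\pmod 4$ is exactly what forces this order to be $2$ rather than $1$ once $r_0\ge 2$, since then $p\equiv 3\not\equiv 1\pmod 4$; this is the reason one cannot sharpen the conclusion to $p-1$ as in Corollary \ref{mainboundcoro}.)

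To finish, I would invoke the elementary fact that $a\mid b$ implies $p^a-1\mid p^b-1$, which turns $\ord_{2^{r_0}}(p)\mid 2$ into $p^{\ord_{2^{r_0}}(p)}-1\mid p^2-1$. Chaining this with the divisibility supplied by Theorem \ref{mainboundthm} gives $\pi^*_{k_1,\cdots,k_s}(p)\mid p^{\ord_{2^{r_0}}(p)}-1\mid p^2-1$, as claimed. I do not expect any substantive obstacle here: the whole content is the verification $r_0\le r$ together with the observation that $2^{r_0}\mid 2^r\mid p^2-1$, after which the conclusion is a one-line consequence of Theorem \ref{mainboundthm}. The only thing demanding care is keeping the two meanings of $r$ disentangled throughout.
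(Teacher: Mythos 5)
Your proof is correct and follows essentially the same route as the paper's: both reduce the corollary to checking that the order of $p$ modulo $2^{\lfloor\log_2(k_s)\rfloor+1}$ divides $2$ (via $2^{\lfloor\log_2(k_s)\rfloor+1}\mid 2^r\mid p^2-1$, using $k_s<2^r$) and then apply Theorem \ref{mainboundthm}. The only difference is cosmetic: the paper pins each order $\ord_{2^i}(p)$ for $2\le i\le \lfloor\log_2(k_s)\rfloor+1$ to exactly $2$, whereas you note that divisibility by $2$ already suffices since $p^a-1\mid p^2-1$ for $a\mid 2$.
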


\begin{proof}
Let $r_0=\lfloor \log_2(k_s) \rfloor$.  Observe that the hypotheses imply $r_0+1\leq r$.  Clearly, $\ord_2(p)=1$ and $\ord_{2^2}(p)=\ord_{2^3}(p)= \cdots =\ord_{2^{r_0+1}}(p)=2$.   The result follows 
from Theorem \ref{mainboundthm}.
\end{proof}

Theorem \ref{mainboundthm} and its corollaries provide upper bounds for the Pisano period of these sequences. The next result provides a lower bound for the weak Pisano period. The proof depends on 
the following concept. Consider a recurrent sequence $\{x_n\}$ which is periodic modulo $m$ for some positive integer $m$.  Fix an integer $i\geq 0$.  The {\it local period modulo $m$ at position $i$} 
is defined as the least positive integer $n$ such that $x_i \equiv x_{i+kn} \mod m$ for every integer $k$.  The local period at position $i$ is denoted by $\lambda(m;i)$.  Note that its existence follows 
from the fact that $\{x_n \mod m\}$ is periodic.  Moreover, it is clear that $\lambda(m;i)$ divides the Pisano period $\pi(m)$ of the sequence.

\begin{theorem}
\label{lowerbound}
Let $p>2$ be prime and let $1\leq k_1<\cdots < k_s$ be integers.   If $r=\lfloor\log_2(k_s)\rfloor+1$, then $$\pi^*_{k_1,\cdots,k_s}(p) \equiv 0 \mod 2^r.$$
\end{theorem}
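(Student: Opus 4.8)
The plan is to combine the structural information recalled just before Theorem \ref{mainboundthm} with Theorem \ref{weakperiodthm}. Over $\mathbb{F}_p$ the characteristic polynomial $\chi_{k_1,\cdots,k_s}(t)$ equals the minimal polynomial of the companion matrix $A=A(k_1,\cdots,k_s)$ and is squarefree, and $\Phi_{2^r}(t-1)$ is always one of its factors. Since the minimal polynomial is squarefree, every exponent in Theorem \ref{weakperiodthm} is $1$, hence each $s_i=\lceil\log_p(1)\rceil=0$, and
\[
\pi^*_{k_1,\cdots,k_s}(p)=\lcm\big(\ord(\lambda):\lambda\text{ an eigenvalue of }A\text{ over }\overline{\mathbb{F}_p}\big),
\]
using that Galois-conjugate roots share the same multiplicative order. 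In particular $\ord(\lambda)\mid\pi^*_{k_1,\cdots,k_s}(p)$ for every single eigenvalue $\lambda$. Because $\Phi_{2^r}(t-1)\mid\chi_{k_1,\cdots,k_s}(t)$, the eigenvalues of $A$ include every element $1+\zeta$ with $\zeta$ a primitive $2^r$-th root of unity in $\overline{\mathbb{F}_p}$ (these are exactly the roots of $\Phi_{2^r}(t-1)$). So it suffices to extract, from among these roots, orders whose least common multiple is divisible by $2^r$.

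The key step I would use is the elementary identity
\[
\zeta=\frac{1+\zeta}{1+\zeta^{-1}},
\]
valid for any primitive $2^r$-th root of unity $\zeta$, where $1+\zeta$ and $1+\zeta^{-1}$ are both nonzero because $r\ge 2$ forces $\zeta\neq -1$. Both $1+\zeta$ and $1+\zeta^{-1}$ are eigenvalues of $A$ (the inverse of a primitive $2^r$-th root is again one), and this identity exhibits $\zeta$ inside the subgroup of $\overline{\mathbb{F}_p}^{\times}$ generated by $1+\zeta$ and $1+\zeta^{-1}$. As the ambient group is abelian, that subgroup has exponent $\lcm(\ord(1+\zeta),\ord(1+\zeta^{-1}))$, so $\ord(\zeta)=2^r$ must divide $\lcm(\ord(1+\zeta),\ord(1+\zeta^{-1}))$. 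Since both $\ord(1+\zeta)$ and $\ord(1+\zeta^{-1})$ divide $\pi^*_{k_1,\cdots,k_s}(p)$, so does their least common multiple, and therefore $2^r\mid\pi^*_{k_1,\cdots,k_s}(p)$.

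The argument requires no estimates, so the genuine difficulty is only bookkeeping. First, I would carefully justify the reduction $\pi^*=\lcm\ord(\lambda)$ from Theorem \ref{weakperiodthm}, which rests on the squarefreeness of $\chi_{k_1,\cdots,k_s}(t)$ over $\mathbb{F}_p$ established before Theorem \ref{mainboundthm}. Second, I would dispose of the degenerate low-degree situation: when $k_s\ge 2$ one has $r\ge 2$, so $\Phi_{2^r}(t-1)$ contributes the invertible roots $1+\zeta$ with $\zeta\neq\pm1$ needed above, whereas the only genuinely degenerate case is $k_s=1$ (forcing $s=1$, the linear function), where the sequence is identically zero and the statement is vacuous. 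Beyond this the entire proof hinges on the single quotient identity, so I expect the write-up to be short.
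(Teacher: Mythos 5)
Your proof is correct, and it takes a genuinely different route from the paper's. The paper reduces to the case $k=2^{r-1}$ (using that $\Phi_{2^r}(t-1)=\chi_{2^{r-1}}(t)$ divides $\chi_{k_1,\cdots,k_s}(t)$, so $\pi^*_{2^{r-1}}(p)\,|\,\pi^*_{k_1,\cdots,k_s}(p)$) and then argues on the sequence itself: extending $\{S(\sigma_{n,2^l})\}$ by $x_0=0$, the trivially balanced zeros $S(\sigma_{2^{l+1}D-1,2^l})=0$ force the local period at position $0$ to divide $2^{l+1}$, while the explicit values $x_{2^j}=2^{2^j-1}\not\equiv 0 \bmod p$ rule out every smaller power of two, so $\lambda(p;0)=2^{l+1}$ divides $\pi_{2^l}(p)\,|\,\pi^*_{2^l}(p)$. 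You instead work entirely at the level of eigenvalues: squarefreeness of the minimal polynomial makes Vince's formula read $\pi^*=\lcm(\ord(\lambda))$ over the roots (your remark that Galois-conjugate roots share an order is the right justification for passing from one root per irreducible factor to all roots), and the identity $\zeta=(1+\zeta)/(1+\zeta^{-1})$ places the primitive $2^r$-th root of unity $\zeta$ in the abelian group generated by the two eigenvalues $1+\zeta$ and $1+\zeta^{-1}$, whose exponent divides $\lcm\bigl(\ord(1+\zeta),\ord(1+\zeta^{-1})\bigr)\,|\,\pi^*_{k_1,\cdots,k_s}(p)$; your side conditions are also in order, since $k_s\geq 2$ gives $r\geq 2$, hence $\zeta\neq -1$ and both eigenvalues are nonzero, and the case $k_s=1$ is degenerate exactly as you say. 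Each approach buys something: yours needs no information about initial conditions, so it applies verbatim to any integer sequence whose minimal polynomial over $\mathbb{F}_p$ is squarefree with $\Phi_{2^r}(t-1)$ as a factor (in particular, to the perturbations of the avoiding-primes section, without the cyclic-vector discussion); the paper's argument is tied to the Boolean-function interpretation and yields slightly more in the power-of-two case, namely the exact local period $\lambda(p;0)=2^r$ and divisibility of the actual Pisano period $\pi_{2^{r-1}}(p)$ by $2^r$, not merely of the weak period.
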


\begin{proof}
Let $p$ be an odd prime. Observe that if the result holds for powers of two, then it is true in general.  This is because $\Phi_{2^r}(t-1)$ is always a factor of $\chi_{k_1,\cdots,k_s}(t)$ and therefore 
$\pi^*_{2^{r-1}}(p)\,|\, \pi^*_{k_1,\cdots,k_s}(p)$.

Suppose that $k$ is a power of two, say $k=2^{l}$ for $l\geq 1$.  Recall that
\begin{equation}
S(\sigma_{n,2^l}) = \sum_{j=0}^n (-1)^{\binom{j}{2^l}}\binom{n}{j}
\end{equation} 
and that the characteristic polynomial of the minimal recurrence associated to it is $\Phi_{2^{l+1}}(t-1)$.  The degree of this polynomial is $2^{l}$.  Thus, in order to construct  
$\{S(\sigma_{n,2^{l}})\}$ from this recursion, one needs $2^{l}$ initial values.  

The first $2^l$ values of $S(\sigma_{n,2^{l}})$ (starting from $n=1$) are
$$2,4,8,\cdots, 2^{2^l-1} , 2^{2^l}-2.$$
These values can be used to obtain every other value of $S(\sigma_{n,2^l})$.  However, note that in this case, one can start the recurrence at $n=0$ instead of $n=1$.  This is because the value at 
$n=0$ is 1 and $-\Phi_{2^{l+1}}(t-1)+t^{2^l}$ returns $2^{2^l}-2$ when $t=2$.  Thus, the values 
$$1, 2,4,8,\cdots, 2^{2^l-1}$$
can be used to generate the value $2^{2^l}-2$ and therefore any other value of $S(\sigma_{n,2^l})$.  Moreover, since 
$$(-\Phi_{2^{l+1}}(t-1)+t^{2^l}+2)/2$$ 
returns $2^{2^l-1}$ when $t=2$, then the sequence defined by $x_0=0$ and $x_j = 2^{j-1}$ for $j=1,2,\cdots, 2^l-1$ is such that $x_0=0$ and $x_n = S(\sigma_{n-1,2^l})$ for $n\geq 1$.  

Observe $x_{2^{l+1}D}=S(\sigma_{2^{l+1}D-1,2^l})=0=x_0$ for every natural number $D$ (these are trivially balanced cases).  This implies that $\lambda(p;0)\,|\, 2^{l+1}$.  However, 
$\lambda(p;0)\neq 2^j$ for $0\leq j<l+1$, because 
$$x_{2^j} = 2^{2^j-1} \not\equiv 0 =x_0 \mod p.$$ 
Thus, $\lambda(p;0)=2^{l+1}$ and the result follows from the fact that $$\lambda(p;0)\,|\, \pi_{2^l}(p)\,|\, \pi^*_{2^l}(p).$$
This concludes the proof.
\end{proof}


\begin{example}
\label{niceex1}
Consider the prime $p=41$.  The terms corresponding to $k=2,3,\cdots, 15$ of the sequence of Pisano periods $\{\pi_k(41)\}_{k\geq 2}$ are given by
$$20, 20, 40, 40, 40, 40, 1680, 1680, 1680, 1680, 1680, 1680, 1680, 1680.$$
Note that
\begin{equation}
\pi_2(41)=\pi_3(41)=20\equiv 0 \mod 4
\end{equation}
as Theorem \ref{lowerbound} predicted.  Moreover, 20 is a divisor of 40, as predicted by Corollary \ref{mainboundcoro}.  From $k=4$ to $k=7$, the period is $\pi_k(41)=40$, thus the bound provided by 
Theorem \ref{mainboundthm} can be attained. Furthermore, $\pi_k(41)=40 \equiv 0 \mod 8$.  Finally, from $k=8$ to $k=15$ the period is $\pi_k(41)=1680=41^2-1$ and $1680\equiv 0 \mod 16$.
\end{example}

\begin{example}
\label{niceex2}
Consider now the prime $p=13$.  The terms corresponding to $k=2,3,\cdots, 15$ of the sequence of Pisano periods $\{\pi_k(13)\}_{k\geq 2}$ are given by
$$12, 12, 168, 168, 168, 168, 5712, 5712, 5712, 5712, 5712, 5712, 5712, 5712.$$
The reader can verify that Theorems \ref{mainboundthm} and \ref{lowerbound} hold.  Observe that from $k=8$ to $k=15$, the period is $\pi_k(13)=5712$, which is a divisor of $13^4-1=28560$.  Therefore, 
the bound provided by Theorem \ref{mainboundthm} is not always sharp.
\end{example}

Examples \ref{niceex1} and \ref{niceex2} show how Theorems \ref{mainboundthm} and \ref{lowerbound} work, but also suggest a beautiful relation between the Pisano periods of the sequences considered in 
this article.  Observe that in both cases, $\pi_{k_1}(p)$ divides $\pi_{k_2}(p)$ for $k_1<k_2$.  Moreover, in both examples one has $\pi_{k_1}(p)=\pi_{k_2}(p)=\pi_{2^{r-1}}(p)$ when 
$2^{r-1}\leq k_1,k_2 <2^{r}$.  The next series of results shows that this relation holds for weak Pisano periods.

\begin{lemma}
\label{rootrelation}
Let $p>2$ be prime and $r>0$ be an integer.  Let $\alpha_1,\cdots,\alpha_{2^r}$ be the roots of $\chi_{2^r}(t)$ in some extension of $\mathbb{F}_p$.  Similarly, let $\beta_1,\cdots,\beta_{2^{r+1}}$ be 
the roots of $\chi_{2^{r+1}}(t)$ in some extension of $\mathbb{F}_p$.  Then,
$$\lcm(\ord(\alpha_1),\cdots,\ord(\alpha_{2^{r}}))\,|\, \lcm(\ord(\beta_1),\cdots,\ord(\beta_{2^{r+1}})).$$
\end{lemma}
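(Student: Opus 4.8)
The plan is to reduce everything to explicit roots of unity and then exhibit a single factorization identity that writes each $\alpha_i$ as a product of two of the $\beta_j$'s. First I would pin down the two characteristic polynomials using Theorem \ref{charpolyn}: for $k=2^r$ one has $\epsilon(k)=0$ and $\bar k=2^r+1$, whose $2$-adic expansion is $1+2^r$, so $\chi_{2^r}(t)=\Phi_{2^{r+1}}(t-1)$, and likewise $\chi_{2^{r+1}}(t)=\Phi_{2^{r+2}}(t-1)$. Hence the roots can be written concretely as $\alpha_i=1+\zeta$, with $\zeta$ ranging over the primitive $2^{r+1}$-th roots of unity in $\overline{\mathbb{F}_p}$, and $\beta_j=1+\omega$, with $\omega$ ranging over the primitive $2^{r+2}$-th roots of unity. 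Since $r>0$, neither $\zeta$ nor $\omega$ equals $-1$, so all these roots are nonzero and their multiplicative orders $\ord(\cdot)$ are defined.

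The heart of the argument exploits the $2$-power structure of cyclotomic polynomials, $\Phi_{2^{m+1}}(x)=\Phi_{2^m}(x^2)$, read off through roots. Fix a primitive $2^{r+1}$-th root $\zeta$. Because $p$ is odd, the $2^{r+2}$-th roots of unity in $\overline{\mathbb{F}_p}$ form a cyclic group of order $2^{r+2}$. In it, $-\zeta$ is again a primitive $2^{r+1}$-th root (multiply $\zeta$ by $-1=\zeta^{2^r}$), and squaring maps primitive $2^{r+2}$-th roots onto primitive $2^{r+1}$-th roots, so I may choose $\omega$ of exact order $2^{r+2}$ with $\omega^2=-\zeta$. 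This yields the factorization
\[
1+\zeta=1-(-\zeta)=1-\omega^2=(1-\omega)(1+\omega).
\]
Moreover $-\omega$ is also a primitive $2^{r+2}$-th root (multiply by $-1=\omega^{2^{r+1}}$), so both $1+\omega$ and $1-\omega=1+(-\omega)$ appear among the roots $\beta_1,\dots,\beta_{2^{r+1}}$ of $\chi_{2^{r+1}}(t)$.

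From here the conclusion is formal. Since the order of a product of commuting units divides the $\lcm$ of their orders, $\ord(\alpha_i)=\ord\bigl((1-\omega)(1+\omega)\bigr)$ divides $\lcm(\ord(1-\omega),\ord(1+\omega))$, which divides $R:=\lcm(\ord(\beta_1),\dots,\ord(\beta_{2^{r+1}}))$. As $i$ was arbitrary, taking the $\lcm$ over all $\alpha_i$ gives $\lcm(\ord(\alpha_1),\dots,\ord(\alpha_{2^r}))\mid R$, which is the claim. Equivalently, by Theorem \ref{weakperiodthm} the two $\lcm$'s are exactly $\pi^*_{2^r}(p)$ and $\pi^*_{2^{r+1}}(p)$, so the lemma amounts to $\pi^*_{2^r}(p)\mid\pi^*_{2^{r+1}}(p)$, matching the divisibility observed in Examples \ref{niceex1} and \ref{niceex2}.

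The step I expect to require the most care is the second paragraph: checking that a square root $\omega$ of $-\zeta$ can be chosen with exact order $2^{r+2}$, and that both $\pm\omega$ are primitive. These are elementary facts about the cyclic $2$-group of $2^{r+2}$-th roots of unity, but they are exactly where the hypotheses $p>2$ (so the group is cyclic of full $2$-power order) and $r>0$ are genuinely used; once the factorization $1+\zeta=(1-\omega)(1+\omega)$ is in hand, the rest is routine.
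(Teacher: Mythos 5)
Your proof is correct and takes essentially the same approach as the paper: both rest on expressing each root $\alpha$ of $\chi_{2^r}(t)$ as a product of two roots of $\chi_{2^{r+1}}(t)$ (the paper via the lifted quadratic $t^2-2t+2-\alpha$ and Vieta's formulas, you via the identity $1+\zeta=(1-\omega)(1+\omega)$ with $\omega^2=-\zeta$), after which $\ord(\alpha_i)\,|\,\lcm(\ord(\beta_1),\cdots,\ord(\beta_{2^{r+1}}))$ is immediate. The only notable difference is presentational: the paper writes out only the case $r=1$ and appeals to ``the same argument and induction,'' whereas your root-of-unity parametrization handles all $r$ uniformly.
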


\begin{proof}
Let $\alpha$ be a root of $\chi_{2^r}(t)=\Phi_{2^{r+1}}(t-1)$ in some extension of $\mathbb{F}_p$.  This is, $(\alpha-1)^{2^r}+1=0$.  Let $\beta$ be a root of $t^2-2t+2-\alpha$, i.e. $(\beta-1)^2+1=\alpha$.  Note that
\begin{eqnarray*}
\chi_{2^{r+1}}(\beta) &=& (\beta-1)^{2^{r+1}}+1\\
&=& ((\beta-1)^2)^{2^{r}}+1\\
&=& ([(\beta-1)^2+1]-1)^{2^{r}}+1\\
&=& (\alpha-1)^{2^{r}}+1\\
&=& 0,
\end{eqnarray*}
and therefore $\beta$ is a root of $\chi_{2^{r+1}}(t)$.  The converse is also true, i.e. if $\beta$ is a root of $\chi_{2^{r+1}}(t)$ in some extension of $\mathbb{F}_p$, then $\alpha=(\beta-1)^2+1$ is such 
that $\chi_{2^r}(\alpha)=0$.  Since $t^2-2t+2-\alpha$ does not have repeated roots, then all roots of $\chi_{2^{r+1}}(t)$ can be obtained from $\alpha_1,\cdots,\alpha_{2^r}$ by finding the roots of 
$t^2-2t+2-\alpha_i$ for $i=1,2,3,\cdots, 2^r$. 

The proof for $r=1$ (instead of the general case) is presented next.  This is done for the simplicity of the writing.  The general case can be done using the same argument and induction.  Let $\alpha_1$ 
and $\alpha_2$ be the roots of $\chi_2(t)$.  Write $t^2-2t+2 = (t-\alpha_1)(t-\alpha_2)$ and observe that
\begin{eqnarray*}
\alpha_1+\alpha_2 &=& 2\\
\alpha_1\alpha_2 &=& 2.
\end{eqnarray*}
Let $\beta_1,\beta_2,\beta_3,\beta_4$ be the roots of $\chi_4(t)$.  Say $\beta_1,\beta_2$ are the roots of $t^2-2t+2-\alpha_1$ and $\beta_3,\beta_4$ are the roots of $t^2-2t+2-\alpha_2$.  Then,
\begin{eqnarray*}
\beta_1+\beta_2 &=& 2\\
\beta_1\beta_2 &=& 2-\alpha_1 = \alpha_2\\
\beta_3+\beta_4 &=& 2\\
\beta_3\beta_4 &=& 2-\alpha_2 = \alpha_1.
\end{eqnarray*}
 Let $m_2 = \lcm(\ord(\alpha_1),\ord(\alpha_2))$ and $m_4 = \lcm(\ord(\beta_1),\ord(\beta_2),\ord(\beta_3),\ord(\beta_4)).$  Observe that
$$\alpha_1^{m_4} = (\beta_3\beta_4)^{m_4} = \beta_3^{m_4}\beta_4^{m_4} = 1\cdot 1 = 1.$$
Similarly, $\alpha_2^{m_4}=1$.  It follows that $m_2\,|\,m_4$.  This concludes the proof.  
\end{proof}

\begin{theorem}
\label{periodreduction}
Let $p>2$ be prime and $1\leq 1<k_1<\cdots<k_s$ be integers with $k_s>1$.  Let $r=\lfloor\log_2(k_s)\rfloor+1$.  Then,
$$\pi^{*}_{k_1,\cdots,k_s}(p) = \pi^{*}_{2^{r-1}}(p).$$
\end{theorem}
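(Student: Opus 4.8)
The plan is to reduce everything to Theorem~\ref{weakperiodthm}, computing each weak Pisano period as a least common multiple of multiplicative orders of roots of the relevant characteristic polynomial, and then to match the two lcm's by a double divisibility. First I would record the shape of the two polynomials involved. By Theorem~\ref{charpolyn}, taking $k=2^{r-1}$ (so that $\epsilon(k)=0$ and $\bar k = 2^{r-1}+1$ has $2$-adic expansion $1+2^{r-1}$) yields $\chi_{2^{r-1}}(t)=\Phi_{2^r}(t-1)$. On the other hand, as recalled at the start of this section, $\chi_{k_1,\cdots,k_s}(t)$ is a squarefree product of factors drawn from the list $t-2,\Phi_4(t-1),\dots,\Phi_{2^r}(t-1)$, and $\Phi_{2^r}(t-1)$ is always present. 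Since over $\mathbb{F}_p$ this characteristic polynomial coincides with the minimal polynomial of the companion matrix and is squarefree, every irreducible factor occurs with multiplicity $e_i=1$, so $s_i=\lceil\log_p(1)\rceil=0$ in Theorem~\ref{weakperiodthm}. Hence, exactly as in the proof of Theorem~\ref{mainboundthm}, both weak Pisano periods equal the lcm of the multiplicative orders of the roots of the corresponding characteristic polynomial (all roots of a single irreducible factor sharing a common order, being Frobenius conjugates).

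Next I would prove the two divisibilities separately. The direction $\pi^*_{2^{r-1}}(p)\mid\pi^*_{k_1,\cdots,k_s}(p)$ is immediate: since $\Phi_{2^r}(t-1)=\chi_{2^{r-1}}(t)$ divides $\chi_{k_1,\cdots,k_s}(t)$, the roots of the former form a subset of the roots of the latter, so the lcm over the smaller set divides the lcm over the larger one. For the reverse direction I would classify the roots of $\chi_{k_1,\cdots,k_s}(t)$: each is either a root of some factor $\Phi_{2^i}(t-1)$ with $2\le i\le r$, or else is the number $2$ arising from a possible factor $t-2$. For the cyclotomic factors I would use the identity $\Phi_{2^i}(t-1)=\chi_{2^{i-1}}(t)$, so that the lcm of the orders of its roots is precisely $\pi^*_{2^{i-1}}(p)$; iterating Lemma~\ref{rootrelation} from level $i-1$ up to level $r-1$ then gives $\pi^*_{2^{i-1}}(p)\mid\pi^*_{2^{r-1}}(p)$.

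The step I expect to require the most care is the factor $t-2$, i.e.\ showing $\ord(2)\mid\pi^*_{2^{r-1}}(p)$. Here I would exploit the fact that $2$ is the product of the roots of $\chi_{2^{r-1}}(t)=\Phi_{2^r}(t-1)=(t-1)^{2^{r-1}}+1$: its constant term is $(-1)^{2^{r-1}}+1=2$ (using $r\ge 2$, which holds since $k_s>1$), so up to sign the product of all its roots equals $2$. Writing $2=\prod_\beta \beta$ over these roots and raising to the power $\pi^*_{2^{r-1}}(p)$, each factor becomes $1$ because each $\ord(\beta)$ divides $\pi^*_{2^{r-1}}(p)$; hence $2^{\pi^*_{2^{r-1}}(p)}=1$ and $\ord(2)\mid\pi^*_{2^{r-1}}(p)$. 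Combining this with the cyclotomic case shows that every root of $\chi_{k_1,\cdots,k_s}(t)$ has order dividing $\pi^*_{2^{r-1}}(p)$, giving $\pi^*_{k_1,\cdots,k_s}(p)\mid\pi^*_{2^{r-1}}(p)$; together with the first divisibility this yields the claimed equality.
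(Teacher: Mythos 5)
Your proposal is correct and follows essentially the same route as the paper: both express the two weak Pisano periods via Theorem \ref{weakperiodthm} as least common multiples of the orders of the roots of the (squarefree) characteristic polynomials and then collapse the larger lcm onto the roots of $\Phi_{2^r}(t-1)=\chi_{2^{r-1}}(t)$ by iterating Lemma \ref{rootrelation}. Your explicit handling of the possible factor $t-2$ --- deducing $\ord(2)\mid\pi^{*}_{2^{r-1}}(p)$ from the fact that the roots of $\Phi_{2^r}(t-1)$ multiply to $2$ --- fills in a detail the paper's proof leaves implicit (its citation of Lemma \ref{rootrelation} covers only the cyclotomic factors), and it is the same trick visible inside the lemma's own proof, where $\alpha_1\alpha_2=2$ for the roots of $\Phi_4(t-1)$.
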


\begin{proof}
Recall that $\chi_{k_1,\cdots, k_s}(t)$ is a square-free product of terms among the list
$$t-2, \Phi_4(t-1), \Phi_8(t-1), \cdots, \Phi_{2^r}(t-1),$$
and that $\Phi_{2^r}(t-1)=\chi_{2^{r-1}}(t)$ is always a factor of $\chi_{k_1,\cdots, k_s}(t)$.  Let $$\alpha_1,\cdots, \alpha_{2^{r-1}}, \alpha_{2^{r-1}+1},\cdots, \alpha_w$$ be the roots of 
$\chi_{k_1,\cdots, k_s}(t)$ in some extension of $\mathbb{F}_p$.  Suppose that $\alpha_1,\cdots, \alpha_{2^{r-1}}$ are the roots that come from the factor $\Phi_{2^r}(t-1)$.  Theorem \ref{weakperiodthm} 
and Lemma \ref{rootrelation} imply
\begin{eqnarray*}
\pi^{*}_{k_1,\cdots,k_s}(p) &=& \lcm(\ord(\alpha_1),\cdots, \ord(\alpha_{2^{r-1}}), \ord(\alpha_{2^{r-1}+1}),\cdots, \ord(\alpha_w))\\
&=& \lcm(\ord(\alpha_1),\cdots, \ord(\alpha_{2^{r-1}}))\\
&=& \pi^{*}_{2^{r-1}}(p).
\end{eqnarray*}
This concludes the proof.
\end{proof}

\begin{corollary}
Let $p>2$ be prime. If $2\leq k_1 < k_2$ are integers, then $\pi^{*}_{k_1}(p)$ divides $\pi^{*}_{k_2}(p)$.  Moreover, if $2^{r-1} \leq k_1, k_2 < 2^r$, then 
$\pi^{*}_{k_1}(p)=\pi^{*}_{k_2}(p)=\pi^{*}_{2^{r-1}}(p).$
\end{corollary}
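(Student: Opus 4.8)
The plan is to reduce both assertions to the powers-of-two case handled by Theorem \ref{periodreduction} and then to chain the divisibility supplied by Lemma \ref{rootrelation}. For a single index $k \geq 2$, Theorem \ref{periodreduction} (taking $s = 1$) gives $\pi^*_k(p) = \pi^*_{2^{r-1}}(p)$, where $r = \lfloor \log_2(k)\rfloor + 1$; equivalently $\pi^*_k(p) = \pi^*_{2^{\lfloor \log_2 k\rfloor}}(p)$. This observation already disposes of the ``moreover'' clause: if $2^{r-1} \leq k_1, k_2 < 2^r$, then $\lfloor \log_2 k_1\rfloor = \lfloor \log_2 k_2\rfloor = r-1$, so both reductions land on the common value $\pi^*_{2^{r-1}}(p)$, giving $\pi^*_{k_1}(p) = \pi^*_{k_2}(p) = \pi^*_{2^{r-1}}(p)$.

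For the divisibility $\pi^*_{k_1}(p)\mid \pi^*_{k_2}(p)$, I would set $r_i = \lfloor \log_2 k_i\rfloor + 1$ and note that $k_1 < k_2$ forces $r_1 \leq r_2$; by the reduction it then suffices to prove $\pi^*_{2^a}(p)\mid \pi^*_{2^b}(p)$ for $1 \leq a \leq b$, where $a = r_1 - 1$ and $b = r_2 - 1$ (the hypothesis $k_i \geq 2$ forces $r_i \geq 2$, hence $a \geq 1$). By transitivity it is enough to establish the single step $\pi^*_{2^a}(p)\mid \pi^*_{2^{a+1}}(p)$ for each $a \geq 1$ and then chain
\[
\pi^*_{2^a}(p) \mid \pi^*_{2^{a+1}}(p) \mid \cdots \mid \pi^*_{2^b}(p).
\]
To prove the single step, I would first rewrite each weak Pisano period as a least common multiple of orders of roots. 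As recorded in the discussion preceding Theorem \ref{mainboundthm}, over $\mathbb{F}_p$ the polynomial $\chi_{2^a}(t) = \Phi_{2^{a+1}}(t-1)$ coincides with the minimal polynomial of its companion matrix and is square-free; hence every exponent in its irreducible factorization equals $1$, and Theorem \ref{weakperiodthm} collapses to $\pi^*_{2^a}(p) = \lcm(\ord(\alpha_1),\ldots,\ord(\alpha_{2^a}))$, where $\alpha_1,\ldots,\alpha_{2^a}$ are the roots of $\chi_{2^a}(t)$, and similarly $\pi^*_{2^{a+1}}(p) = \lcm(\ord(\beta_1),\ldots,\ord(\beta_{2^{a+1}}))$ for the roots $\beta_j$ of $\chi_{2^{a+1}}(t)$. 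With both periods in this form, the single step $\pi^*_{2^a}(p)\mid \pi^*_{2^{a+1}}(p)$ is precisely Lemma \ref{rootrelation} applied with its parameter equal to $a$.

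The argument is essentially bookkeeping layered on top of the three prior results, so I do not expect a serious obstacle. The one place where care is needed --- and where an oversight would invalidate the proof --- is the collapse of Theorem \ref{weakperiodthm} to the plain lcm of orders: this requires that $\chi_{2^a}(t)$ be square-free over $\mathbb{F}_p$ (so that all $e_i = 1$ and all $s_i = \lceil \log_p 1\rceil = 0$) and that it agree with the minimal polynomial of the companion matrix. Both are available from the paragraph before Theorem \ref{mainboundthm}, the square-freeness ultimately resting on the fact that $\Phi_{2^{a+1}}$ has no repeated roots modulo the odd prime $p$. Granting these, the corollary follows from one application each of Theorem \ref{periodreduction}, the lcm form of Theorem \ref{weakperiodthm}, and Lemma \ref{rootrelation}.
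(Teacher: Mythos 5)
Your proof is correct and follows the same route as the paper, whose entire proof reads that the corollary ``is a direct consequence of Lemma \ref{rootrelation} and Theorem \ref{periodreduction}''; you have simply made the bookkeeping explicit (the reduction to $\pi^*_{2^{r_i-1}}(p)$ via Theorem \ref{periodreduction}, the collapse of Theorem \ref{weakperiodthm} to an lcm of root orders using square-freeness of $\Phi_{2^{a+1}}(t-1)$ over $\mathbb{F}_p$, and the chaining of Lemma \ref{rootrelation}). No gaps; the care you flag about all exponents $e_i=1$ is exactly the justification the paper supplies in the paragraph preceding Theorem \ref{mainboundthm}.
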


\begin{proof}
This is a direct consequence of Lemma \ref{rootrelation} and Theorem \ref{periodreduction}.
\end{proof}

In the next section Pisano periods are used to show that some symmetric Boolean functions are not balanced.  The idea is simple: given a Boolean function $\sigma_{n,[k_1,\cdots,k_s]}$ and a prime $p$, 
check that $S(\sigma_{n,[k_1,\cdots,k_s]})\not\equiv 0 \mod p$ for $n=1,2,\cdots, \pi_{k_1,\cdots,k_s}(p)$.  Note that in practice, it suffices to use weak Pisano periods instead of Pisano periods.  
The reason for this is that  there are at most a finite amount of primes $p$ for which the initial valued vector $X_0$ of $\{S(\sigma_{n,[k_1,\cdots,k_s}])\}$ is not a cyclic vector for the companion 
matrix $A(k_1,\cdots, k_s)$.  Thus, for almost all primes one is guaranteed to have $\pi_{k_1,\cdots,k_s}(p)=\pi^*_{k_1,\cdots,k_s}(p)$.  In fact, for powers of two, one has $\pi_{2^l}(p)=\pi^*_{2^l}(p)$ 
for every odd prime.   This is because for $X_0=(1,2,4,8,\cdots,2^{2^l-1})^T$ and $A=A(2^l)$, the determinant of the matrix  
\begin{equation}
B=(X_0, AX_0, A^2X_0,\cdots, A^{2^{2^l-1}-1}X_0)
\end{equation}  
is $\det(B)=-2$ if $l=1$ and $\det(B)=2^{2^l-1}$ if $l\neq 1$.  Therefore, the vector $X_0$ is a cyclic vector of $A$ over $\mathbb{F}_p$ for every odd prime and the claim holds.


\section{Avoiding primes}
\label{avoidprimes}

Suppose that one encounters the problem of showing that a sequence $\{x_n\}$ is never zero, i.e. $x_n\neq 0$ for every $n$.  In practice, this can be a very difficult problem to tackle.  However, if one 
can show that there is a prime $p$ such that it does not divide $x_n$ for every $n$, then the sequence is never zero.  An integer sequence $\{x_n\}$ is said to {\it avoid} a prime $p$ if $p$ does not 
divide $x_n$ for every $n$, otherwise $\{x_n\}$ {\it cannot avoid}  $p$ (it is common in the literature to say that, in the later case, $p$ divides $\{x_n\}$).   Of course, there are sequences that cannot 
avoid any prime, yet they are never zero.  The most simple example is $\{n\}_{n\in \mathbb{N}}$, a sequence that is even periodic modulo $m$ for every positive integer $m$.

Consider the sequence given by the exponential sum of an elementary symmetric polynomial $\{S(\sigma_{n,k})\}$, which is at the root of Cusick-Li-St$\check{\mbox{a}}$nic$\check{\mbox{a}}$'s conjecture.  
It is known that if $k$ is odd, then $S(\sigma_{n,k})$ is never zero \cite{cusick1}.  On the other hand, if $k=2^r$ is a power of two, then $n=2^{r+1}m-1$, for $m$ a positive integer, is such that 
$S(\sigma_{n,k})=0$ (these are trivially balanced cases).  Therefore, the first case that neither of these two results rule out is $\{S(\sigma_{n,6})\}$.  

The sequence $\{S(\sigma_{n,6})\}$ satisfies the linear recurrence
\begin{equation}
x_n  = 8 x_{n-1} -28 x_{n-2}+ 56 x_{n-3} -70x_{n-4}+ 56x_{n-5} -28x_{n-6}+ 8x_{n-7}.
\end{equation}
Choose the prime $p=3$, which is the smallest one for which $\{S(\sigma_{n,6}) \mod p\}$ is periodic.
In this case, the Pisano period is given by $\pi_6(3)=8$.  The fundamental period for this sequence is 
$$2, 1, 2, 1, 2, 2, 1, 1.$$
This implies that $3$ does not divide $S(\sigma_{n,6})$ for any $n$, i.e. the sequence avoids the prime 3.  Thus the Boolean function $\sigma_{n,6}$ is not balanced for any $n$.  

The fact that $\sigma_{n,6}$ is never balanced is already known and it follows from \cite[Th. 4, p. 2805]{CV05}, however, the above discussion shows how the technique works.
The same technique can be used to prove that other symmetric polynomials are not balanced because they avoid primes.   However, the choice of the prime $p=3$ seems to be a rather bad one.  In fact, it looks 
like the only sequence of the form $\{S(\sigma_{n,k})\}$ that avoids 3 is the one when $k=6$ and there might be a heuristic argument behind this.   

In the case of $\{S(\sigma_{n,k})\mod 3\}$, for $2\leq k \leq 7$, the Pisano period is 8.  The probability that a vector of length 8 with entries in $\mathbb{F}_3$, which is chosen at random, does not contain 0 is 
$$\left(\frac{2}{3}\right)^8\approx 0.03901844231.$$
The situation gets worse for $k>7$.  In particular, for $8\leq k \leq 15$, the Pisano period is given by $\pi_k(3)=80$ and $(2/3)^{80}$ is already minuscule.  Thus, one expects that if a vector in $\mathbb{F}_3^{80}$ is chosen at random, then it contains a good amount of zeros (the expected number of zeros is $80/3=26.66\bar{6}$).  Indeed, let 
\begin{equation}
z_k(p)=\text{amount of }0\text{'s in } \{S(\sigma_{n,k}) \mod p\,|\, 1\leq n \leq \pi_k(p)\}.
\end{equation}
Then,
$$\begin{array}{|c|c|c|c|c|c|c|c|c|}
\hline
k & 8 & 9 & 10 & 11 & 12 & 13 & 14 & 15 \\
z_k(3)&  38 & 21 & 27 & 26 & 17 & 22 & 17 & 21 \\
\hline
\end{array}$$
For $16\leq k \leq 31$, the Pisano period is $\pi_k(3)=6560$ and one has
$$
\begin{array}{|c|c|c|c|c|c|c|c|c|}
\hline
k& 16 & 17 & 18 & 19 & 20 & 21 & 22 & 23 \\
z_k(3)& 2402 & 2079 & 2100 & 2117 & 2081 & 2143 & 2081 & 2133 \\
 \hline
k& 24 & 25 & 26 & 27 & 28 & 29 & 30 & 31 \\
z_k(3)& 2091 & 2324 & 2204 & 2169 & 2194 & 2108 & 2049 & 2153 \\
\hline
\end{array}$$
For $32\leq k\leq 63$, the period is $\pi_k(3)=3^{16}-1=43046720$ and for $64\leq k\leq 127$, the value is $\pi_k(3)=3^{32}-1=1853020188851840$.  For $128\leq k< 255$, $\pi_k(3)=3^{64}-1$ and for $256\leq k <511$, the period is $\pi_k(3)=3^{128}-1$, which is a number with 62 digits.  This data suggests that it should not be ``expected" to find an elementary symmetric polynomial of degree $k$ bigger than 7 such that $\{S(\sigma_{n,k})\}$ avoids 3.

The above analysis does not suggest the absence of symmetric Boolean functions for which their exponential sums avoid the prime 3, it only suggests that for elementary symmetric Boolean functions.   However, given the magnitude of the Pisano periods $\pi_k(3)$,  if there are other symmetric Boolean functions for which their exponential sums avoid 3, then it is probable that their degrees are less than or equal to 7.   In fact, given that there are $127$ symmetric Boolean functions of degree less than or equal to 7, then it is almost certain that they exist.  Indeed, an exhaustive search found all the symmetric Boolean functions of degree less than or equal to 7 such that their exponential sums avoid the prime 3,
\begin{equation}
\label{avoid3}
\begin{array}{lllll}
\sigma_{n,[2,3,5]}& \sigma_{n,6} & \sigma_{n,[4,6]} & \sigma_{n,[4,5,6]} & \sigma_{n,[2,3,5,6]}\\
\sigma_{n,[4,7]} & \sigma_{n,[4,5,6,7]} & \sigma_{n,[2,3,4,5,7]} & \sigma_{n,[2,3,4,5,6,7]}. & \\
\end{array}
\end{equation}
These might be all of the symmetric Boolean functions with this property.

\begin{remark}
The symmetric Boolean function $\sigma_{n,[2,3,5]}$ is peculiar in the sense that every other symmetric Boolean function in (\ref{avoid3}) is asymptotically not balanced while $\sigma_{n,[2,3,5]}$ is not.  This means that even without the knowledge that their exponential sums avoid the prime 3, one knows that every symmetric Boolean function in (\ref{avoid3}) different from $\sigma_{n,[2,3,5]}$ is not balanced  for $n$ big enough.  The same cannot be said about $\sigma_{n,[2,3,5]}$, thus the fact that its exponential sum avoids 3 holds the key to show that it is not balanced.  Figure \ref{figsigma532} is a graphical representation of $S(\sigma_{n,[2,3,5]})/2^n$.  Observe that even though the graph oscillates between positive and negative numbers and can get as close to zero as one desires, it is never zero because $\{S(\sigma_{n,[2,3,5]})\}_{n\in\mathbb{N}}$ avoids the prime 3.
\begin{figure}[htb]
\caption{Graphical representation of $S(\sigma_{n,[2,3,5]})/2^n$.}
\centering
\includegraphics[width=2.8in]{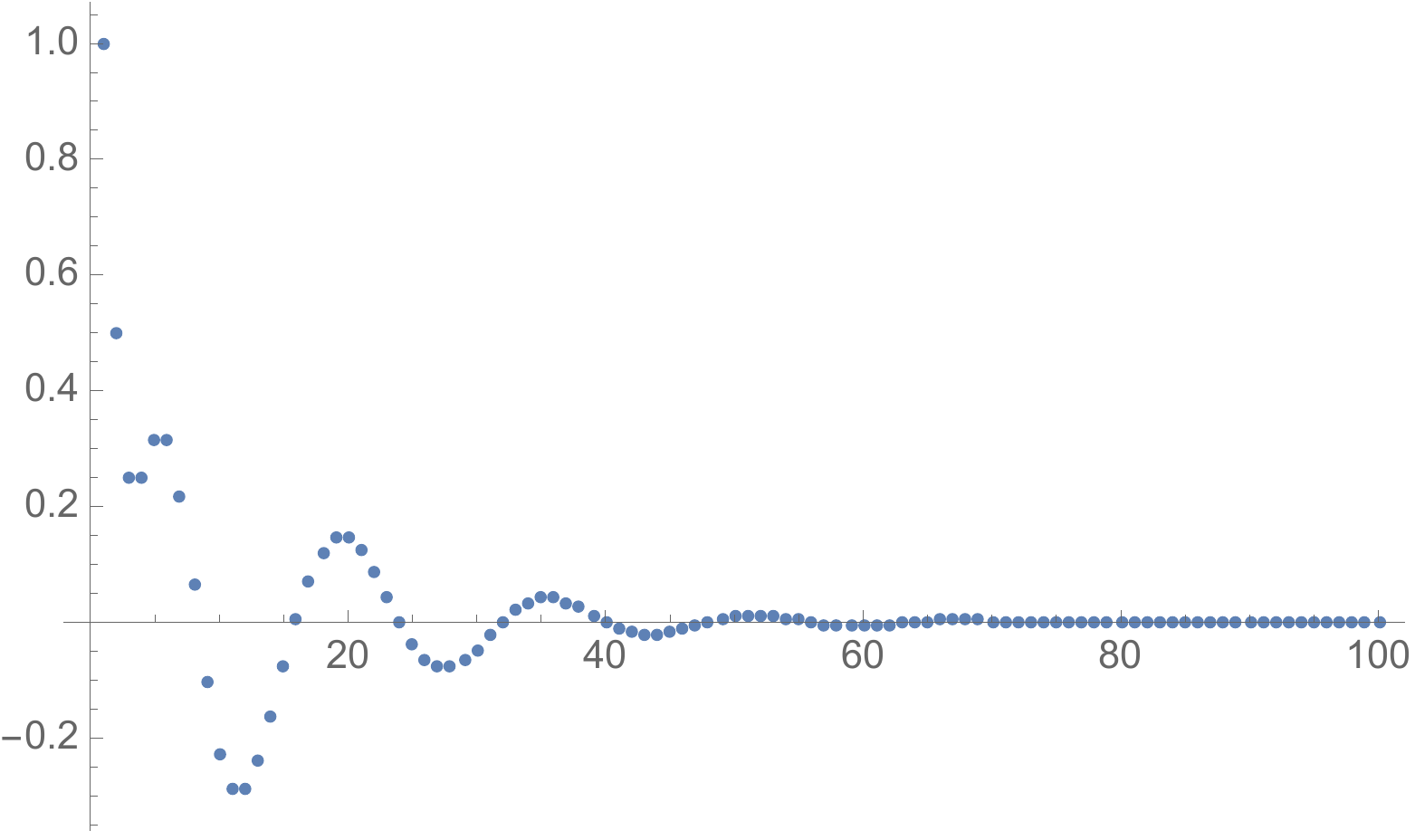}
\label{figsigma532}
\end{figure}
\end{remark}

The quest now is to identify primes that are excellent candidates for these sequences to avoid.  These should be primes $p$ with ``small" period relative to $p$.  Theorem \ref{mainboundthm} and Corollary \ref{mainboundcoro} offer a clue.   In fact, the case $p\equiv 1 \mod 4$ provides some optimism. Recall that  Corollary \ref{mainboundcoro} states that if $p=2^rb+1$ with $b$ odd, then, for $1\leq k_1<\cdots<k_s< 2^r$, the Pisano period $\pi_{k_1,\cdots,k_s}(p)$ divides $p-1$ and
\begin{equation}
0.3678794412\approx \frac{1}{e} < \left(\frac{p-1}{p}\right)^{p-1} \leq \left(\frac{4}{5}\right)^4=0.4096.
\end{equation} 
Therefore, heuristically, if a symmetric Boolean function is not balanced and its degree is $k_s$, then there is a good chance that its exponential sum avoids a prime of the form $2^r b+1$ where $b$ is an integer and $r=\lfloor\log_2(k_s)\rfloor+1$.  

The case $p\equiv 3 \mod 4$ is different. Theorem \ref{mainboundthm} implies that the smallest possible Pisano period is a divisor of $p^2-1$ and 
\begin{equation}
\left(\frac{p-1}{p}\right)^{p^2-1}\to 0 \,\,\text{ as }p\to \infty.
\end{equation}
Thus, if the exponential sum of an individual symmetric Boolean function is considered, then these primes do not seem to be good candidates for the sequence to avoid.  However, as it was the case of the prime 3, there is strength in numbers.  To be precise, if $p\equiv 3 \mod 4$ and $p^2-1$ is highly divisible by 2, then Corollary \ref{boundcorop3} implies that the exponential sum of every symmetric Boolean function of degree less than the higher power of 2 that divides $p^2-1$ has Pisano period divisor of $p^2-1$.  If the number of these symmetric Boolean functions is big enough, one may have that the expected number of them that avoid $p$ is bigger than 1 and therefore these primes also become good candidates.  For example, consider the prime $p=7$.  In this case, $p^2-1=48$ and $48=16\cdot 3$.  There are $2^{15}-1=32767$ symmetric Boolean functions of degree less than 16 and 
\begin{equation}
(2^{15}-1)\left(\frac{6}{7}\right)^{48} \approx 20.0443.
\end{equation}
This suggests that about 20 of them have exponential sums that avoid 7.  An exhaustive search found out that there are exactly 18 of them.  These are
\begin{equation}
\begin{array}{lll}
\sigma_{n,[2, 3, 4, 5, 9, 10, 11]} & \sigma_{n,[2, 3, 5, 7, 9, 12]} & \sigma_{n,[3, 4, 5, 7, 10, 11, 12]}\\
\sigma_{n,[3, 6, 8, 9, 12, 13]} & \sigma_{n,[2, 3, 6, 7, 9, 12, 13]} & \sigma_{n,[2, 3, 5, 7, 8, 9, 10, 11, 12, 13]}\\
\sigma_{n,[2, 3, 4, 5, 6, 7, 9, 10, 11, 12, 13]} & \sigma_{n,[3, 6, 10, 13, 14]} & \sigma_{n,[2, 3, 5, 7, 9, 14]}\\
\sigma_{n,[5, 6, 8, 10, 13, 14]} & \sigma_{n,[8, 9, 10, 11, 12, 14]} & \sigma_{n,[8, 9, 10, 12, 13, 14]}\\
\sigma_{n,[3, 4, 5, 6, 8, 9, 14]} & \sigma_{n,[3, 5, 8, 9, 10, 12, 14]} & \sigma_{n,[3, 5, 8, 10, 11, 12, 13, 14]}\\
\sigma_{n,[5, 6, 8, 11, 13, 15]} & \sigma_{n,[3, 4, 5, 7, 8, 9, 15]} & \sigma_{n,[2, 3, 8, 9, 10, 12, 14, 15]}.
\end{array}
\end{equation}

\begin{remark}
The first prime that it is not a ``good candidate" in the sense described above is the prime 11.  From degree 4 to degree 7, the weak Pisano period is 120.  From degree 8 to degree 15, the weak Pisano period is 14640 and from degree 16 to degree 31 the weak Pisano period is already an astonishing number: 214358880. It has been verified that the prime 11 divides the sequence of exponential sums of every symmetric Boolean function degree less than or equal to 24 and there is a total of $2^{24}-1 = 16777217$ of such functions.  The prime 19 is similar.  Moreover, it appears that for every prime $p>2$, there is a number $\mathfrak{K}(p)$ such that $p$ divides the sequence of exponential sums of every symmetric Boolean function of degree bigger than or equal to $\mathfrak{K}(p)$.  For example, computer experiments suggest that $\mathfrak{K}(3)=8$, $\mathfrak{K}(7)=16$ and $\mathfrak{K}(11)=1.$
\end{remark}

Now take a step back and consider once again the case of an elementary symmetric Boolean function.  Since the focus is on an individual function, then only primes $p\equiv 1\mod 4$ are considered, as they seem to be excellent candidates to avoid.  Suppose that $k$ is a positive integer which is not a power of two.  Define $r$ by $2^{r-1}<k<2^r$ and let $\mathfrak{p}(k)$ be the smallest prime $p$ of the form $p=2^rm+1$, $m$ a positive integer, such that $\{S(\sigma_{n,k})\}$ avoids $p$.  For example, $\mathfrak{p}(3)=5$.  Of course, in general this number might not exist, however its existence would imply Cusick-Li-St$\check{\mbox{a}}$nic$\check{\mbox{a}}$'s conjecture.  It has been verified that the prime $\mathfrak{p}(k)$ exists for all $k$ in the range $1< k < 2048$ that are not powers of 2 showing that $\sigma_{n,k}$ is not balanced for any $n$ and such $k$'s.  For for example, $\mathfrak{p}(1292)=176129$.   The reader is invited to see the table included in Appendix \ref{app:table}, which contains $\mathfrak{p}(k)$ for $k\leq 263.$  Note that this supports the claim that primes $p\equiv 1 \mod 4$ are good candidates to avoid.  This information leads the authors to the following conjecture:

\begin{conjecture}
\label{theprimeexists}
Let $k$ be a natural number which is not a power of two.  The prime $\mathfrak{p}(k)$ exists.
\end{conjecture}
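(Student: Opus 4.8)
The plan is to fix $k$ not a power of two, set $r=\lfloor\log_2 k\rfloor+1$ so that $2^{r-1}<k<2^r$, and hunt for the desired prime inside the arithmetic progression $p\equiv 1\pmod{2^r}$. By Dirichlet's theorem this progression contains infinitely many primes, and each such $p$ automatically satisfies $p\equiv 1\pmod 4$ (since the smallest admissible $k$ is $3$, forcing $r\geq 2$). Writing $p-1=2^s b$ with $b$ odd, we have $s\geq r$, hence $k<2^r\leq 2^s$, and Corollary \ref{mainboundcoro} (applied with exponent $s$) yields $\pi^{*}_{k}(p)\mid p-1$, so a fortiori $\pi_k(p)\mid p-1$. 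Thus modulo every prime in this progression the sequence $\{S(\sigma_{n,k})\bmod p\}$ is a fully split, short recurrent sequence, which is precisely the regime where avoidance is plausible.

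The next step is to render the sequence completely explicit modulo such a $p$. Because $\binom{j}{k}\bmod 2$ depends only on $j\bmod 2^r$ (Lucas/Kummer, using $k<2^r$), the sign $\delta(j):=(-1)^{\binom{j}{k}}$ is $2^r$-periodic; expanding it in the characters of $\mathbb{Z}/2^r\mathbb{Z}$ and applying the binomial theorem gives the closed form
\[
S(\sigma_{n,k})=\sum_{a=0}^{2^r-1}\widehat{\delta}(a)\,(1+\omega^a)^n,\qquad \omega=e^{2\pi i/2^r},\quad \widehat{\delta}(a)=\frac{1}{2^r}\sum_{j=0}^{2^r-1}\delta(j)\,\omega^{-aj}.
\]
When $2^r\mid p-1$ one replaces $\omega$ by a primitive $2^r$-th root of unity $w\in\mathbb{F}_p^{\times}$, obtaining $S(\sigma_{n,k})\equiv\sum_a\widehat{\delta}(a)(1+w^a)^n\pmod p$ with all coefficients and bases in $\mathbb{F}_p$ (note $a=2^{r-1}$ contributes nothing, since $1+w^{2^{r-1}}=0$). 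The problem now reads: exhibit one prime $p\equiv 1\pmod{2^r}$ for which the $\mathbb{F}_p$-valued function $n\mapsto\sum_a\widehat{\delta}(a)(1+w^a)^n$ omits the value $0$ across a full period.

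The heart of the argument is a counting/density estimate. Each such $p$ produces a periodic word of length dividing $p-1$ over $\mathbb{F}_p$, and the expected proportion of zero-avoiding words is $(1-1/p)^{p-1}>1/e$. I would try to make this rigorous by bounding, via character-sum / Weil-type estimates, the number of residues $n$ in a period with $\sum_a\widehat{\delta}(a)(1+w^a)^n=0$, and showing that for a positive proportion of primes in the progression this count stays strictly below the period length, so that some value of $n$ is missed and $p$ is avoided. Establishing positive density of avoided primes in this progression — the kind of statement proved for certain functions in section \ref{avoidprimes} — would settle existence outright, indeed giving infinitely many such primes.

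The hard part, and the reason this remains only a conjecture, is exactly this last step. The bases $1+w^a$ obey nontrivial multiplicative relations in $\mathbb{F}_p^{\times}$ (conjugate pairs give $(1+w^a)(1+w^{-a})=2+w^a+w^{-a}$, and the full product is cyclotomically constrained), so the geometric sequences $(1+w^a)^n$ are far from independent and generic equidistribution bounds do not obviously force the zero-count below the period. The situation is worst when $k$ is asymptotically balanced, i.e. $\widehat{\delta}(0)=c_0(k)=0$: the dominant $2^n$ term vanishes, the sum oscillates near zero, and controlling its vanishing set is essentially the content of the Cusick-Li-St$\check{\mbox{a}}$nic$\check{\mbox{a}}$ conjecture itself. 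Since the existence of $\mathfrak{p}(k)$ implies that conjecture, any complete proof must overcome this obstruction, and I expect genuine progress to require new input beyond the periodicity-plus-Weil toolkit — for instance a structural nonvanishing property of the Fourier coefficients $\widehat{\delta}(a)$ peculiar to elementary symmetric degrees.
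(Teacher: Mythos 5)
The statement you are tackling is Conjecture \ref{theprimeexists}; the paper itself contains no proof of it. What the paper offers is exactly the scaffolding you rebuilt: the progression $p\equiv 1\pmod{2^r}$ from the definition of $\mathfrak{p}(k)$, the bound $\pi^{*}_{k}(p)\mid p-1$ from Corollary \ref{mainboundcoro}, the heuristic $\left(\frac{p-1}{p}\right)^{p-1}>1/e$ from section \ref{avoidprimes}, and machine verification that $\mathfrak{p}(k)$ exists for every non-power-of-two $k<2048$ (Appendix \ref{app:table}). Your rigorous portion is correct and consistent with this: the application of Corollary \ref{mainboundcoro} with $p-1=2^{s}b$, $s\geq r$, is valid, and the closed form $S(\sigma_{n,k})=\sum_{a}\widehat{\delta}(a)(1+\omega^{a})^{n}$, obtained from the $2^r$-periodicity of $(-1)^{\binom{j}{k}}$ via the discrete Fourier transform, is a genuine sharpening of the paper's discussion (its bases are precisely $2$ together with the roots of the polynomials $\Phi_{2^i}(t-1)$ in the paper's factorization, and $\widehat{\delta}(0)=c_0(k)$). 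But you are candid that the decisive step is missing, and that candor is warranted: what you submitted is a plan, not a proof, and no proof exists.

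The gap deserves a concrete name, because the route you sketch would fail even if pursued energetically. Avoidance of $p$ is the assertion that the number $Z(p)$ of residues $n$ in a full period with $\sum_a\widehat{\delta}(a)(1+w^a)^n\equiv 0\pmod p$ is \emph{exactly zero}. With period dividing $p-1$ and values in $\mathbb{F}_p$, the random model predicts $Z(p)\approx (p-1)/p$, a main term of size $O(1)$. Character-sum and Weil-type equidistribution estimates determine such counts only up to error terms that grow like a power of $p$ (at best $p^{1/2+\epsilon}$); they can never certify that a quantity of expected size about $1$ vanishes exactly, nor even distinguish $Z(p)=0$ from $Z(p)=1$. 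So ``positive density of avoided primes via square-root cancellation'' is not a missing technical lemma but a method mismatch: what is needed is algebraic rather than analytic --- a structural criterion for membership in $Av(k)$, in the spirit of the paper's degree-three theorem, where avoidance is characterized exactly by $\ord_p(4)\equiv 2\pmod 4$ and density then falls out of Dirichlet for $p\equiv 5\pmod 8$. No such criterion is known for general $k$. One small correction to your closing diagnosis: the ``worst case $\widehat{\delta}(0)=c_0(k)=0$'' does not occur for elementary symmetric functions of degree not a power of two, since by Lucas' theorem the number of $j\in\{0,\ldots,2^r-1\}$ with $\binom{j}{k}$ odd is $2^{r-w_2(k)}$, whence $c_0(k)=1-2^{1-w_2(k)}\neq 0$ when $w_2(k)\geq 2$; this, however, buys nothing modulo $p$, where $2^n$ enjoys no dominance over the other geometric terms, so the essential difficulty you identify is unchanged.
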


\begin{remark}
As mentioned before, an affirmative answer to this conjecture implies that Cusick-Li-St$\check{\mbox{a}}$nic$\check{\mbox{a}}$'s conjecture is true.  However, Conjecture \ref{theprimeexists} is much stronger.  Recurrence (\ref{mainrec}) can be used to extend $\{S(\sigma_{n,k})\}_{n\in\mathbb{N}}$ infinitely to the left (non-positive values of $n$) with the compromise that the value of $S(\sigma_{n,k})$ for such an $n$ may be a rational number instead of an integer.   For example, the sequence $\{S(\sigma_{n,6}\})\}_{n\in\mathbb{Z}}$ is given by 
$$\cdots,\frac{233}{128},\frac{65}{64},-\frac{19}{32},-\frac{19}{16},-\frac{5}{8},\frac{1}{4},1, 2, 4, 8, 16, 32, 62,112,\cdots.$$
The existence of the prime $\mathfrak{p}(k)$ implies that $\{S(\sigma_{n,k})\}_{n\in \mathbb{Z}}$ is never zero, regardless of the integer value of $n$.  This is much stronger than asserting that $S(\sigma_{n,k})\neq 0$ for $n\geq k$.  For instance, $S(\sigma_{n,[3,2,1]})\neq 0$ for $n\geq 3$, which means, in terms of the theory of Boolean functions, that $\sigma_{n,[3,2,1]}$ is not balanced.  However, every prime divides $\{S(\sigma_{n,[3,2,1]})\}$ for the simple reason that the value 0 is obtained when $n=1$.  A Boolean function $\sigma_{n,[k_1,\cdots,k_s]}$ is said to be {\em strongly non-balanced} if $S(\sigma_{n,[k_1,\cdots,k_s]})\neq 0$ for any integral value of $n$.  For example, $\sigma_{n,6}$ is strongly non-balanced, while $\sigma_{n,[3,2,1]}$ is non-balanced, but not strongly non-balanced.  Of course, if Conjecture \ref{theprimeexists} is true, it would imply that every elementary symmetric Boolean function $\sigma_{n,k}$ is strongly non-balanced for $k$ not a power of two.
\end{remark}

The above argument provides examples of symmetric Boolean functions whose exponential sums avoid primes.  This leads to the following question: given a sequence of exponential sums like $\{S(\sigma_{n,[k_1,\cdots,k_s]})\}$, how many primes are avoided by it?   In order to try to answer this question, the focus is now shifted to the set of primes avoided by a given sequence of exponential sums.  Since all the theory discussed throughout the article also applies to perturbations, then they are also included in the study below.  These perturbations will prove to be useful when the case of degree two is considered.

Let $1\leq k_1<\cdots <k_s$ and define
\begin{equation}
Av(k_1,\cdots,k_s)=\{p \text{ prime} \,\,|\,\, S(\sigma_{n,[k_1,\cdots,k_s]}) \text{  avoids }p \}
\end{equation}
Similarly, for a perturbation $\sigma_{n,[k_1,\cdots,k_s]}+F({\bf X})$, the set of primes avoided by $\{S(\sigma_{n,[k_1,\cdots,k_s]}+F({\bf X}))\}$ is denoted by $Av(k_1,\cdots,k_s; F({\bf X}))$.  Following the literature, the complements of the sets $Av(k_1,\cdots,k_s)$ and $Av(k_1,\cdots,k_s; F({\bf X}))$, i.e. the sets of primes that divide the sequences $\{S(\sigma_{n,[k_1,\cdots,k_s]})\}$ and $\{S(\sigma_{n,[k_1,\cdots,k_s]}+F({\bf X}))\}$, are denoted by $P(k_1,\cdots,k_s)$ and $P(k_1,\cdots,k_s; F({\bf X}))$, respectively. 
Suppose $\{x_n\}_{n\in \mathbb{N}}$ is an integer sequence that satisfies a homogenous linear recurrence with integer coefficients.  The set of primes dividing the sequence $\{x_n\}$ is denoted by $P(x_n)$.  The sequence $\{x_n\}$ is said to be {\it degenerate} if the associated characteristic polynomial is either inseparable or has two different roots whose quotient is a root of unity, otherwise the sequence is said to be {\it non-degenerate}.   In 1921,  P{\'o}lya  proved that if the sequence is non-degenerate, then $P(x_n)$ is infinite.

Consider the sequence $\{S(\sigma_{n,[k_1,\cdots,k_s]})\}$. It has been established that its characteristic polynomial is a square-free product of terms among the list 
$$t-2, \Phi_4(t-1), \Phi_8(t-1), \cdots, \Phi_{2^r}(t-1),$$
where $r=\lfloor \log_2(k_s) \rfloor+1$, and that $\Phi_{2^r}(t-1)$ is always a factor.  Consider the numbers $\alpha_1=1-\exp(\pi i/2^r)$
 and $\alpha_2=1+\exp((2^r-1)\pi i/2^r)$, which are two roots of $\Phi_{2^r}(t-1)$.  Observe that  quotient $\alpha_1/\alpha_2 = -\exp(\pi i/2^r)$ is a root of unity and so $\{S(\sigma_{n,[k_1,\cdots,k_s]})\}$ is degenerate (the same reason implies that sequences of exponential sums of the perturbations considered in this article are also degenerate).  This means that P{\'o}lya's result does not guarantee that $P(k_1,\cdots,k_s)$ is infinite and thus, $Av(k_1,\cdots,k_s)$ may very well be infinite or even dense within the set of primes.  However, it can also be finite, which is clearly the case if one of the initial conditions is 0.   For instance, if the degrees $1\leq k_1<\cdots<k_s$ are such that $k_1=1$, then it is clear from the definition of $S(\sigma_{n,[1,k_2,\cdots,k_s]})$ as a binomial sum that $S(\sigma_{1,[1,k_2,\cdots,k_s]})=0$, i.e. the first initial condition is 0.   This implies that $Av(1,k_2,\cdots,k_s)=\emptyset$, which is equivalent to say that every prime $p$ divides infinitely many terms of the sequence $\{S(\sigma_{n,[1,k_2,\cdots,k_s]})\}$.  The same applies to $\{S(\sigma_{n,[2,k_2,\cdots,k_s]})\}$ when $k_2\geq 4$ because the third initial condition is 0.

\begin{definition}
Suppose that $S$ is a subset of the set of primes.  The {\em natural density} of $S$  is defined by
\begin{equation}
\delta(S)=\lim_{N\to\infty} \frac{\#\{p\in S: p\leq N\}}{\#\{p \text{ prime }: p\leq N\}}
\end{equation}
whenever this limit exists.
\end{definition}
 
The fact that these sequences are degenerate can be explicitly seen for the case of the exponential sum of $\sigma_{n,2}$ and its perturbations.   In particular, for any perturbation $\sigma_{n,2}+F({\bf X})$, the set $P(2;F({\bf X}))$ is either the set of all primes (because the sequence $\{S(\sigma_{n,2}+F({\bf X}))\}_{n>j}$ has zeros in it) or a finite set.  In particular, the following result holds.

\begin{theorem}
\label{pert2}
Let $n$ and $j$ be positive integers. Suppose that $j<n$ and let $F({\bf X})$ be a binary polynomial in the variables $X_1,\cdots, X_j$ (the first $j$ variables in $X_1,\cdots, X_n$).  Let $D_F(n)=\sigma_{n,2}+F({\bf X})$.  If 
\begin{equation}
\frac{S(D_F(2+j))}{S(D_F(1+j))}-1 \notin \{0,\pm 1, \infty\},
\end{equation}
then $D_F(n)$ is not balanced for every $n>j$.  Moreover, the set of primes that divide the sequence \begin{equation}
\label{distor}
\{S(D_F(n))\}_{n>j} 
\end{equation}
is finite, which implies that the set of primes avoided by (\ref{distor}) has density 1.
\end{theorem}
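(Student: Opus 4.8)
The plan is to specialize the perturbation identity (\ref{perturbationeq}) to $\sigma_{n,2}$ and reduce everything to two explicit building-block sequences. Since $\deg\sigma_{n,2}=2$, the only admissible shifts in (\ref{perturbationeq}) are $i=0,1,2$, and, using the conventions $\sigma_{\cdot,0}=1$ and $\binom{m}{i}\bmod 2$ from the Remark following (\ref{perturbationeq}), the inner exponential sum collapses: for even $m$ it equals $(-1)^{\binom{m}{2}}S(\sigma_{n-j,2})$, and for odd $m$ it equals $(-1)^{\binom{m}{2}}S(\sigma_{n-j,[2,1]})$. Collecting the coefficients into the two integers $\gamma_E=\sum_{m\ \text{even}}(-1)^{\binom{m}{2}}C_m(F)$ and $\gamma_O=\sum_{m\ \text{odd}}(-1)^{\binom{m}{2}}C_m(F)$, I obtain, for $n>j$, the identity $S(D_F(n))=\gamma_E\,S(\sigma_{N,2})+\gamma_O\,S(\sigma_{N,[2,1]})$ with $N=n-j$.

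Next I would write down closed forms for the two building blocks. Each satisfies the order-two recurrence with characteristic polynomial $\Phi_4(t-1)=t^2-2t+2$ (both sequences are purely oscillatory, i.e. $O(2^{N/2})$, so there is no $t-2$ factor); matching initial conditions gives $S(\sigma_{N,2})=(1+i)^{N-1}+(1-i)^{N-1}$ and $S(\sigma_{N,[2,1]})=\tfrac12\left[(1+i)^{N+1}+(1-i)^{N+1}\right]$. Using $(1+i)^2=2i$ and its conjugate, these combine into the single expression $S(D_F(n))=(1+i)^{N-1}Z+(1-i)^{N-1}\overline{Z}=2\,\mathrm{Re}\!\left((1+i)^{N-1}Z\right)$, where $Z=\gamma_E+i\gamma_O\in\mathbb{Z}[i]$.

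The decisive reduction uses $(1+i)^4=-4$: writing $N-1=4q+s$ with $s\in\{0,1,2,3\}$ gives $S(D_F(n))=2(-4)^q\,\mathrm{Re}\!\left((1+i)^sZ\right)$, and a four-line check of $s=0,1,2,3$ shows that $\mathrm{Re}((1+i)^sZ)$ equals $\gamma_E$, $\gamma_E-\gamma_O$, $-2\gamma_O$, $-2(\gamma_E+\gamma_O)$ respectively. Thus every term of (\ref{distor}) is $\pm$ a power of $2$ times one of the four fixed integers $\gamma_E,\ \gamma_O,\ \gamma_E-\gamma_O,\ \gamma_E+\gamma_O$. Evaluating at $n=1+j$ (so $N-1=0$) and $n=2+j$ (so $N-1=1$) gives $S(D_F(1+j))=2\gamma_E$ and $S(D_F(2+j))=2(\gamma_E-\gamma_O)$; in particular the hypothesis already forces $\gamma_E\neq 0$ (otherwise the ratio is $\infty$), and the hypothesis quantity equals $S(D_F(2+j))/S(D_F(1+j))-1=-\gamma_O/\gamma_E$. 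The exceptional set then translates exactly: $-\gamma_O/\gamma_E\notin\{0,\pm1,\infty\}$ is equivalent to $\gamma_E,\ \gamma_O,\ \gamma_E-\gamma_O,\ \gamma_E+\gamma_O$ all being nonzero.

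With all four integers nonzero, the displayed form $S(D_F(n))=2(-4)^q\,\mathrm{Re}((1+i)^sZ)$ is nonzero for every $n>j$, so $D_F(n)$ is not balanced. For the final assertion, any prime dividing some term divides $2^a\,\gamma_E\gamma_O(\gamma_E-\gamma_O)(\gamma_E+\gamma_O)$ for some $a$, hence lies in the finite set consisting of $2$ together with the prime divisors of the fixed nonzero integer $\gamma_E\gamma_O(\gamma_E-\gamma_O)(\gamma_E+\gamma_O)$; therefore the set of primes dividing (\ref{distor}) is finite and its complement has natural density $1$. The main obstacle I anticipate is the bookkeeping in the first two steps—correctly collapsing the inner sums of (\ref{perturbationeq}) under the mod-$2$ and $\sigma_{\cdot,0}=1$ conventions, and then matching the set $\{0,\pm1,\infty\}$ with the four vanishing conditions. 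The Gaussian-integer arithmetic through $(1+i)^4=-4$ is short but the signs must be tracked carefully.
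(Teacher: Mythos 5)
Your proof is correct; I checked the collapse of (\ref{perturbationeq}) under the mod-$2$ and $\sigma_{\cdot,0}=1$ conventions, the closed forms for the two building blocks, the four residues of $N-1 \bmod 4$, and the translation of the hypothesis into $\gamma_E\gamma_O(\gamma_E-\gamma_O)(\gamma_E+\gamma_O)\neq 0$, and all of it holds. The underlying mechanism is the same as the paper's — solve a second-order recurrence in closed form and exploit the period-$4$ sign structure to reduce both balancedness and prime divisibility to four fixed integers — but your route differs in two genuine ways. The paper works directly with $d(n)=S(D_F(n))$: it asserts that $\{d(n)\}_{n>j}$ satisfies $x_n=2x_{n-1}-2x_{n-2}$, solves this in the real trigonometric form $2^{(n-j+1)/2-1}\bigl(d(j+1)\sin(\pi(n-j+1)/4)+(d(j+1)-d(j+2))\cos(\pi(n-j+1)/4)\bigr)$, reads off balancedness as $d(j+2)/d(j+1)-1=\tan\bigl((n-j+1)\pi/4\bigr)\in\{0,\pm 1,\infty\}$, and then splits $n$ mod $4$ to conclude that any prime dividing some $d(n)$ divides one of $2d(j+1)-d(j+2)$, $d(j+1)$, $d(j+2)$, $d(j+1)-d(j+2)$ — exactly your $2(\gamma_E+\gamma_O)$, $2\gamma_E$, $2(\gamma_E-\gamma_O)$, $2\gamma_O$. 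Your first departure is deriving the order-two recurrence from the decomposition $S(D_F(n))=\gamma_E S(\sigma_{N,2})+\gamma_O S(\sigma_{N,[2,1]})$, which makes self-contained a step the paper leaves implicit (it rests on \cite{cm2}); note, though, that your stated justification that the building blocks are ``purely oscillatory, so there is no $t-2$ factor'' is itself loose — a cleaner patch is to observe that your Gaussian closed forms satisfy the order-three recurrence (\ref{mainrec}) (since $1\pm i$ are roots of $\Phi_4(t-1)$, a factor of its characteristic polynomial) and agree with the binomial sums at $N=1,2,3$, which pins them down without invoking minimality or growth rates. Your second departure, the form $2\operatorname{Re}\bigl((1+i)^{N-1}Z\bigr)$ with $(1+i)^4=-4$, replaces the paper's $\sin/\cos/\tan$ bookkeeping with exact integer arithmetic and avoids the half-integer powers of $2$; what the paper's version buys in exchange is brevity and the fact that its four critical quantities are expressed directly in the data $d(j+1),d(j+2)$ appearing in the theorem's hypothesis.
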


\begin{proof}
Let $d(n)=S(D_F(n))$.  Then, the sequence $\{d(n)\}_{n>j}$ satisfies the recurrence
\begin{equation}
x_n = 2x_{n-1}-2x_{n-2}.
\end{equation}
Finding the roots of the characteristic polynomial and solving the corresponding system for this perturbation, one finds
\begin{eqnarray}
\nonumber
d(n)&=&2^{\frac{1}{2} (-j+n+1)-1} \left(d(j+1) \sin\left(\frac{ \pi (n-j+1)}{4}\right)\right)\\
&&+2^{\frac{1}{2} (-j+n+1)-1} \left((d(j+1)-d(j+2)) \cos\left(\frac{\pi(n-j+1)}{4}\right)\right).
\end{eqnarray}
Therefore, $D_F(n)$ is balanced, i.e. $d(n)=0$, if and only if 
\begin{equation}
\frac{d(j+2)}{d(j+1)}-1=\tan \left((n-j+1)\frac{\pi}{4}\right).
\end{equation}
Thus, if 
$$\frac{d(j+2)}{d(j+1)}-1\notin \{0,\pm 1,\infty\},$$
then the perturbation $D_F(n)$ is not balanced for any $n>j$.   

Suppose that the perturbation $D_F(n)$ is not balanced.  The quest now is to find the set of primes that divide $d(n)$.  Dividing the values of $d(n)$ according to the remainders of $n$ on division by 4, one finds that a prime $p$ divides $d(n)$ if and only if it divides one of the following four numbers
$$2d(j+1)-d(j+2),\, d(j+1), \, d(j+2), \text{ or } d(j+1)-d(j+2).$$
Therefore, the set of primes that divide $\{S(D_F(n))\}_{n>j}$ is finite and so the set of primes avoided by (\ref{distor}) has density 1.
\end{proof}

Observe that the above proof implies that once $\{S(\sigma_{n,2}+F({\bf X}))\}_{n>j}$ has a zero at position $n_0$, then there are zeros at the positions $n_0+4k$ for $k\in \mathbb{Z}$.   Also, if the zeros in $\{S(\sigma_{n,2}+F({\bf X}))\}_{n>j}$ are removed, then there is only a finite amount of primes that divide the remaining terms.

\begin{example}
Consider the perturbation $\sigma_{n,2}+X_1X_2X_3+X_4X_5X_6$.  Let $d(n)=S(\sigma_{n,2}+X_1X_2X_3+X_4X_5X_6)$.  Observe that 
$$d(7)=-24 \text{ and } d(8)=8.$$
Since $d(8)/d(7)-1 \notin \{0,\pm 1, \infty\}$, then $\sigma_{n,2}+X_1X_2X_3+X_4X_5X_6$ is never balanced.  Moreover, since
$$\{2d(7)-d(8), d(7),d(8), d(7)-d(8)\}=\{-56, -24, 8, -32\},$$
then only the primes 2, 3 and 7 divide some terms of the sequence $\{S(\sigma_{n,2}+X_1X_2X_3+X_4X_5X_6)\}.$  Explicitly, 2 divides every term of the sequence (trivial), 3 divides $d(7+4m)$ and 7 divides  $d(10+4m)$ for every $m\geq 0$.
\end{example}

The story of the case of degree two and its perturbations is short: either $Av(2;F(\bf X))$ is empty or has density 1 because $P(2;F({\bf X}))$ is finite.  The next step (naturally) is the case of degree 3.  At this early stage the study becomes hard.    The next result, which considers the cases for $\sigma_{n,3}$ and $\sigma_{n,[3,2]}$, serves as an example.

\begin{theorem}
The sequences $\{S(\sigma_{n,3})\}$ and $\{S(\sigma_{n,[3,2]})\}$ avoid a prime $p>2$ if and only if $\ord_p(4)\equiv 2 \mod 4$.  In particular, both sequences avoid all primes of the form $8q+5$.
\end{theorem}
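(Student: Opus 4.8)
The plan is to obtain explicit closed forms for both sequences and then reduce avoidance to an elementary question about $\ord_p(2)$. By Lucas' theorem, $\binom{j}{3}$ is odd exactly when $j\equiv 3\pmod 4$, while $\binom{j}{2}$ is odd exactly when $j\equiv 2,3\pmod 4$; hence $\binom{j}{3}+\binom{j}{2}$ is odd exactly when $j\equiv 2\pmod 4$. Feeding these parities into (\ref{maingen}) gives $S(\sigma_{n,3})=2^n-2\sum_{j\equiv 3\,(4)}\binom{n}{j}$ and $S(\sigma_{n,[3,2]})=2^n-2\sum_{j\equiv 2\,(4)}\binom{n}{j}$. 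A roots-of-unity filter on these partial binomial sums (valid for $n\ge 1$; the $n=0$ term must be excluded because of the $0^0$ ambiguity in the filter) then yields
\begin{equation*}
S(\sigma_{n,3})=2^{n-1}+2^{n/2}\sin\!\Big(\frac{n\pi}{4}\Big),\qquad S(\sigma_{n,[3,2]})=2^{n-1}+2^{n/2}\cos\!\Big(\frac{n\pi}{4}\Big).
\end{equation*}
These are consistent with the fact that, by Theorem \ref{charpolyn} together with the computation $c_0(3)=c_0(3,2)=\tfrac12\neq 0$, both sequences satisfy the minimal recurrence with characteristic polynomial $(t-2)\Phi_4(t-1)=(t-2)(t^2-2t+2)$, whose roots are $2$ and $1\pm i=\sqrt2\,e^{\pm i\pi/4}$.

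Next I would split $n$ into residue classes modulo $8$ and rewrite each value as $2^{a}(2^{e}\pm 1)$ for explicit $a,e$ and sign. For an odd prime $p$ the power-of-two prefactor is irrelevant, so $p$ divides a given term if and only if $2^{e}\equiv \mp 1\pmod p$ for the corresponding exponent and sign. Collecting this data, $\{S(\sigma_{n,3})\}$ fails to avoid $p$ precisely when either $2^{e}\equiv -1\pmod p$ for some $e\equiv 0,1\pmod 4$, or $2^{e}\equiv 1\pmod p$ for some $e\equiv 2,3\pmod 4$; for $\{S(\sigma_{n,[3,2]})\}$ the same description holds with the classes $\{0,3\}$ and $\{1,2\}$ in place of $\{0,1\}$ and $\{2,3\}$. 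In both cases every exponent in the relevant classes is actually attained as $n$ ranges over $\mathbb{N}$, so avoidance of $p$ is equivalent to the conjunction of the two non-divisibility conditions.

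The heart of the argument is to translate these into a statement about $d:=\ord_p(2)$. Since $2^{e}\equiv 1\pmod p$ iff $d\mid e$, the condition forbidding $2^{e}\equiv 1$ on a union of residue classes mod $4$ holds iff no multiple of $d$ lands in those classes; inspecting $\{kd\bmod 4\}$ shows this forces $4\mid d$. Granting $4\mid d$, one has $-1=2^{d/2}$, so $2^{e}\equiv -1$ iff $e\equiv d/2\pmod d$, and all such $e$ share the single residue $d/2\bmod 4$; the condition forbidding $2^{e}\equiv -1$ on its classes then forces $d/2\equiv 2\pmod 4$, i.e. $d\equiv 4\pmod 8$. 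Carrying this out for both functions collapses to the same criterion $d\equiv 4\pmod 8$. Finally $\ord_p(4)=d/\gcd(d,2)$, so $d\equiv 4\pmod 8$ is exactly $\ord_p(4)\equiv 2\pmod 4$, which gives the stated equivalence.

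For the last assertion, if $p=8q+5$ then $p\equiv 5\pmod 8$, so $2$ is a quadratic non-residue modulo $p$ and $2^{(p-1)/2}\equiv -1\pmod p$. Thus $d\nmid (p-1)/2$ while $d\mid p-1$, which forces $v_2(d)=v_2(p-1)=v_2\bigl(4(2q+1)\bigr)=2$, i.e. $d\equiv 4\pmod 8$; hence both sequences avoid every prime of the form $8q+5$. I expect the main obstacle to be the bookkeeping in the third paragraph: one must track carefully how the residues modulo $4$ of the exponents interact with the residues modulo $d$ at which $2^{e}$ equals $\pm 1$, and verify that the two function cases, despite carrying different forbidden residue classes, yield the identical order condition $d\equiv 4\pmod 8$.
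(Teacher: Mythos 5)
Your proof is correct and follows essentially the same route as the paper's: the identical closed forms, the same splitting of each term into $2^{a}\bigl(2^{e}\pm 1\bigr)$ according to $n \bmod 4$, and the same reduction to an order condition, since your criterion $\ord_p(2)\equiv 4 \pmod 8$ is just a reparametrization of the paper's $\ord_p(4)\equiv 2\pmod 4$ via $\ord_p(4)=\ord_p(2)/\gcd(\ord_p(2),2)$. The only local differences are minor: you derive the closed forms by a roots-of-unity filter rather than by solving the recurrence, you treat $\sigma_{n,[3,2]}$ explicitly where the paper merely notes that a similar argument applies, and for $p\equiv 5\pmod 8$ you deduce $v_2(\ord_p(2))=v_2(p-1)=2$ directly from $\left(\frac{2}{p}\right)=-1$, a slightly cleaner substitute for the paper's quadratic-residue contradiction involving $(p\pm 1)/2$.
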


\begin{proof}
The proof for $\{S(\sigma_{n,3})\}$  is presented next.  A similar argument can be applied to $\{S(\sigma_{n,[3,2]})\}$.

Recall that $\{S(\sigma_{n,3})\}$ satisfies the recurrence 
\begin{equation}
\label{char3}
x_n = 4x_{n-1}-6x_{n-2}+4 x_{n-3},
\end{equation}
with initial conditions $x_1=2$, $x_2=4$ and $x_3 = 6$.  Finding the roots of the characteristic polynomial and using the initial conditions yield
\begin{equation}
S(\sigma_{n,3}) = 2^{n-1}+2^{n/2}\sin(n \pi/4).
\end{equation}
Thus,
\begin{equation}
\label{closedform}
S(\sigma_{n,3}) = \begin{cases}
 2^{4m-1} & \text{if } n=4m \\
 4^m(4^m+(-1)^m)& \text{if } n=4m+1 \\
 2^{2m+1}(4^m+(-1)^m) & \text{if } n=4m+2 \\
 2^{2m+1}(2^{2m+1}+(-1)^m) & \text{if } n=4m+3.
\end{cases}
\end{equation}
Equation (\ref{closedform}) implies that an odd prime $p$ divides $S(\sigma_{n,3})$ if and only if $p$ divides $4^m+(-1)^m$ for some $m$ or $p$ divides $2^{2m+1}+(-1)^m$ for some $m$.  Thus, the problem is reduced to find primes $p$ for which the congruences
\begin{equation}
\label{firstcong}
4^m+(-1)^m \equiv 0 \mod p
\end{equation}
and 
\begin{equation}
\label{secondcong}
2^{2m+1}+(-1)^m \equiv 0 \mod p
\end{equation}
have solutions.

It is not hard to show that if (\ref{secondcong}) has a solution, then (\ref{firstcong}) has a solution, thus it is enough to find solutions for  (\ref{firstcong}).  According to the parity of $m$, the solutions must be integers $a$ such that
\begin{equation}
\label{finalcong}
4^{2a}\equiv -1\mod p \,\,\, \text{ or } \,\,\, 4^{2a+1}\equiv 1\mod p.
\end{equation}

Suppose $\ord_p(4)=4l$, then $4^{2l}\equiv -1 \mod p$. Thus, there are solutions to (\ref{finalcong}) and therefore $p$ divides $\{S(\sigma_{n,3})\}$.  Also, it is clear that if $\ord_p(4) = 2l+1$, then (\ref{finalcong}) has solutions and therefore $p$ also divides $\{S(\sigma_{n,3})\}$ in this case.  Consider now the last case, this is, suppose that $\ord_p(4)=4l+2$. Then,
\begin{eqnarray*}
4^{4l+2} &\equiv& 1 \mod p\\
4^{2l+1} &\equiv& -1 \mod p.
\end{eqnarray*}
This implies that every solution to $4^j\equiv 1 \mod p$ is even and every solution to $4^j\equiv -1 \mod p$ is odd.  Therefore, (\ref{finalcong}) does not have solutions mod $p$, which implies that $\{S(\sigma_{n,3})\}$ avoids $p$.   This concludes the first statement of the theorem.

Suppose now that $p=8q+5$.  Since $4$ is a square, then
\begin{equation}
4^{\frac{p-1}{2}}=4^{4q+2}\equiv 1 \mod p
\end{equation}
and therefore $\ord_p(4) \, |\, 4q+2$.  Suppose that $\ord_p(4)$ is an odd factor of $4q+2$.  Say $\ord_p(4)=2k+1$.  Then, $4^{2k+1}\equiv 1 \mod p$.  Let $x=2^k$ and observe that
\begin{equation}
(2x^2-1)(2x^2+1)\equiv 0 \mod p.
\end{equation}
But $p$ cannot divide $2x^2-1$ because this would imply that $(p+1)/2$ is a square modulo $p$ and $(p+1)/2$ is a quadratic non-residue for $p\equiv 5 \mod 8$.  Similarly, $p$ does not divide $2x^2+1$ because $(p-1)/2$ is a quadratic non-residue for $p\equiv 5 \mod 8.$    Therefore, $\ord_p(4)$ odd leads to a contradiction.  It must be that $\ord_p(4)$ is even.  Since $\ord_p(4) \, |\, 4q+2$, then $\ord_p(4)$ is divisible by 2, but not by 4.  Therefore $\ord_p(4)\equiv 2\mod 4$ and, by previous argument, $\{S(\sigma_{n,3})\}$ avoids $p$.  This concludes the proof. 
\end{proof}

It is a classical result that the natural density of the set of primes in the arithmetic progression $a n+b$, $\gcd(a,b)=1$, is $1/\varphi(a)$ where $\varphi$ is the Euler's totient function.  The above theorem shows that $Av(3)=Av(3,2)$ contains a subset that has positive natural density, namely $\{p \text{ prime } \,|\, p\equiv 5 \mod 8\}$, which has density 1/4.  This implies that $Av(3)=Av(3,2)$ is infinite.  Observe that if $p\equiv 3 \mod 4$, then $\ord_p(4)$ is odd.  This implies that $\{p \text{ prime} \,|\, p\equiv 3 \mod 4\} \subseteq P(3) = Av(3)^c.$  Therefore, if $Av(3)$ has natural density, then $1/4\leq \delta(Av(3))\leq 1/2$.  Experiments suggest that $\delta(Av(3))$ is close to $1/3.$  Calculating the exact density of a subset of primes is, generally, not easy.  Perhaps an argument similar to the one presented in \cite{hasse} by Hasse is needed to calculate the exact density of $Av(3).$  This problem will not be considered in this work.

Continue with the case of degree 3.  The other two symmetric Boolean functions of degree 3 that have not been considered are $\sigma_{n,[3,1]}$ and $\sigma_{n,[3,2,1]}$.  In these cases, the sets $Av(3,1)$ and $Av(3,2,1)$ are empty because $\{S(\sigma_{n,[3,1]})\}$ and $\{S(\sigma_{n,[3,2,1]})\}$ satisfy (\ref{char3}) with initial conditions $0,0,2$ and $0,-2,-6$, respectively.  Thus, this method cannot be used to identify them as not balanced.  However, observe that these symmetric Boolean functions are asymptotically not balanced.  In fact,
\begin{equation}
\lim_{n\to \infty} \frac{S(\sigma_{n,[3,1]})}{2^n} = \frac{1}{2} \text{ and }  \lim_{n\to \infty} \frac{S(\sigma_{n,[3,2,1]})}{2^n} = -\frac{1}{2}.
\end{equation}  
Solving (\ref{char3}) with the corresponding initial conditions leads to 
\begin{equation}
S(\sigma_{n,[3,1]})=2^{n-1}-2^{n/2} \sin \left(\frac{\pi  n}{4}\right) 
\end{equation}
and
\begin{equation}
S(\sigma_{n,[3,2,1]})=-2^{n-1}+ 2^{n/2} \cos \left(\frac{\pi  n}{4}\right).
\end{equation}
From here it is not hard to see that for all $n\geq 4$,
\begin{equation}
\left|\frac{S(\sigma_{n,[3,1]})}{2^n} - \frac{1}{2}\right|<\frac{1}{3} \text{ and } \left|\frac{S(\sigma_{n,[3,2,1]})}{2^n} + \frac{1}{2}\right|<\frac{1}{3}.
\end{equation}
Thus, $\sigma_{n,[3,1]}$ and $\sigma_{n,[3,2,1]}$ are in fact are not balanced for every $n\geq 3$, even though $\{S(\sigma_{n,[3,1]})\}$ and $\{S(\sigma_{n,[3,2,1]})\}$ are divisible by every prime.

Perturbations of $\sigma_{n,3}$ exhibit a similar behavior.  The next result provides an example.
\begin{proposition}
The set of primes avoided by the sequence 
$$\{S(\sigma_{n,3}+X_1+X_2X_3+X_1X_2X_3)\}_{n\geq 4}$$ is infinite.  In particular, it contains a subset with positive density. 
\end{proposition}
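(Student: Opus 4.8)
The plan is to produce a closed form for the sequence via the perturbation formula (\ref{perturbationeq}) and then repeat the divisibility analysis used above for $\{S(\sigma_{n,3})\}$. Here $F({\bf X})=X_1+X_2X_3+X_1X_2X_3$ depends on $j=3$ variables, so I would first compute the weighted counts $C_m(F)$. A direct enumeration of the eight points of $\mathbb{F}_2^{\,3}$ gives $C_0(F)=1$, $C_1(F)=1$, $C_2(F)=-3$ and $C_3(F)=-1$. Substituting these into (\ref{perturbationeq}) with $k_1=3$ and reducing the inner binomials modulo $2$, the four inner exponential sums become $S(\sigma_{n-3,3})$, $S(\sigma_{n-3,[3,2]})$, $S(\sigma_{n-3,[3,1]})$ and $-S(\sigma_{n-3,[3,2,1]})$, the sign in the last coming from the constant $\sigma_{\cdot,0}=1$ that flips every sign. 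Hence
\begin{equation}
S(\sigma_{n,3}+F)=S(\sigma_{n-3,3})+S(\sigma_{n-3,[3,2]})-3\,S(\sigma_{n-3,[3,1]})+S(\sigma_{n-3,[3,2,1]}).
\end{equation}

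Next I would assemble the closed form. The excerpt already records closed forms for $S(\sigma_{m,3})$, $S(\sigma_{m,[3,1]})$ and $S(\sigma_{m,[3,2,1]})$; the missing one satisfies (\ref{char3}) with initial values $2,2,2$ (and asymptotic constant $1/2$), giving $S(\sigma_{m,[3,2]})=2^{m-1}+2^{m/2}\cos(m\pi/4)$. Writing $m=n-3$ and collecting the $2^{m-1}$ and $2^{m/2}$ contributions, the combination collapses to
\begin{equation}
S(\sigma_{n,3}+F)=-2^{m}+2^{m/2}\bigl(4\sin(m\pi/4)+2\cos(m\pi/4)\bigr),\qquad m=n-3.
\end{equation}
As a check, $m=1$ gives $4$, matching a direct evaluation of $S(\sigma_{4,3}+F)$.

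With the closed form in hand, I would reduce $p$-divisibility to congruences in powers of $2$. For an odd prime $p$, splitting by $m\bmod 8$ and cancelling the common power of $2$ in each class (valid since $p$ is odd), the two even classes both collapse to a congruence of the shape $2^{\text{odd}}\equiv\pm1\pmod p$, while the odd classes $m\equiv 1,3,5,7\pmod 8$ collapse respectively to
\begin{equation}
2^{k}\equiv 3,\quad 2^{k}\equiv 1,\quad 2^{k}\equiv -3,\quad 2^{k}\equiv -1\pmod p,
\end{equation}
subject to $k\equiv 0,1,2,3\pmod 4$ in that order; thus $p$ divides some term exactly when one of these six congruences is solvable. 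I would then rule all of them out for primes $p\equiv 5\pmod{24}$. Such a $p$ satisfies $p\equiv 5\pmod 8$, so $2$ is a quadratic non-residue, $-1$ is a residue (as $p\equiv 1\pmod 4$), and $v_2(\ord_p(2))=2$; and $p\equiv 2\pmod 3$, so by quadratic reciprocity $3$ and hence $-3$ are non-residues. Then $2^{\text{odd}}$ is $2$ times a square, hence a non-residue, so it cannot equal the residue $\pm1$, killing the even classes; the classes $m\equiv1,5$ need an even power of $2$ (a residue) to equal $\pm3$ (a non-residue); the class $m\equiv3$ needs $2^{k}\equiv1$ with $k$ odd, impossible since $4\mid\ord_p(2)\mid k$ forces $k$ even; and $m\equiv7$ needs $2^{k}\equiv-1$ with $k$ odd, impossible since $k\equiv\ord_p(2)/2\pmod{\ord_p(2)}$ with $\ord_p(2)/2$ even. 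Hence every $p\equiv 5\pmod{24}$ is avoided, and by Dirichlet's theorem this class is infinite of density $1/\varphi(24)=1/8$ inside the primes, which proves the statement.

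The main obstacle I anticipate is the bookkeeping of the eight residue classes of $m$ and their reduction to the six congruences, and—more substantively—recognizing that $p\equiv 5\pmod 8$ alone does not suffice. Indeed $p=13\equiv 5\pmod 8$ is \emph{not} avoided, because the class $m\equiv 1$ is solvable there ($3$ is a residue modulo $13$), and this is precisely what forces the auxiliary condition $p\equiv 2\pmod 3$ and hence the full congruence $p\equiv 5\pmod{24}$.
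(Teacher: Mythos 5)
Your proof is correct, and it rests on the same two pillars as the paper's: an explicit closed form for the sequence split into residue classes, followed by a quadratic-residue obstruction showing that every prime $p\equiv 5\pmod{24}$ is avoided, with Dirichlet's theorem giving the density $1/\varphi(24)=1/8$. The route to the closed form, however, is genuinely different. The paper bypasses the perturbation machinery entirely: it observes that $\{S(\sigma_{n,3}+F({\bf X}))\}_{n\geq 4}$ satisfies (\ref{char3}) with initial conditions $4,4,-4$, solves the recurrence, and obtains the factored form (\ref{closedformper}) indexed by $n\bmod 4$, in which the nontrivial factor is always $2^{2m}+(-1)^m c$ with $c\in\{2,8,12\}$ --- an \emph{even} power of $2$ throughout --- so avoidance for $p\equiv 5\pmod{24}$ follows purely from the displayed Legendre-symbol tables, i.e.\ from $\pm 2,\pm 3$ all being non-residues. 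You instead derive the closed form through (\ref{perturbationeq}): your values $C_0=1$, $C_1=1$, $C_2=-3$, $C_3=-1$, the reduction of the inner sums to $\sigma_{n-3,3},\sigma_{n-3,[3,2]},\sigma_{n-3,[3,1]},\sigma_{n-3,[3,2,1]}$ (with the sign flip from $\sigma_{\cdot,0}=1$), and the missing closed form $S(\sigma_{m,[3,2]})=2^{m-1}+2^{m/2}\cos(m\pi/4)$ all check out, and your formula $-2^m+2^{m/2}(4\sin(m\pi/4)+2\cos(m\pi/4))$ agrees with (\ref{closedformper}). Your finer split by $m=n-3\bmod 8$ then leaves \emph{odd} exponents of $2$ in several classes, which residue symbols alone cannot kill; you correctly supply the extra ingredient that for $p\equiv 5\pmod 8$ the order $\ord_p(2)$ is divisible by $4$ but not by $8$, handling the classes $2^k\equiv 1$ and $2^k\equiv -1$ with $k$ odd. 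What each approach buys: the paper's mod-$4$ parametrization is leaner and stays entirely inside quadratic-residue facts; yours costs the order-of-$2$ lemma but showcases the perturbation formula in action and yields a sharper ``if and only if'' description of $P(3;F({\bf X}))$ via six explicit congruences --- closer in spirit to the paper's treatment of $\sigma_{n,3}$ itself --- and your $p=13$ sanity check (where $2^4\equiv 3$ makes the class $m\equiv 1\pmod 8$ solvable, so $13\mid S(\sigma_{12,3}+F({\bf X}))$) correctly pinpoints why $p\equiv 5\pmod 8$ alone does not suffice.
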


\begin{proof}
Let $F({\bf X})=X_1+X_2X_3+X_1X_2X_3$. The sequence $\{S(\sigma_{n,3}+F({\bf X}))\}_{n\geq 4}$ satisfies the linear recurrence (\ref{char3}) with initial conditions $4,4,-4.$
Solving the corresponding characteristic equation, using the initial conditions and simplification yield
\begin{equation}
\label{closedformper}
S(\sigma_{n,3}+F({\bf X})) = \begin{cases}
 -2^{2 m-3} \left(2^{2 m}+(-1)^m12 \right) & \text{if } n=4m \\
 -2^{2 m-2} \left(2^{2 m}+(-1)^m8 \right)& \text{if } n=4m+1 \\
 -2^{2 m-1} \left(2^{2 m}+(-1)^m2\right) & \text{if } n=4m+2 \\
 -2^{2 m} \left(2^{2 m}-(-1)^m2 \right) & \text{if } n=4m+3.
\end{cases}
\end{equation}
Thus, the sequence $\{S(\sigma_{n,3}+F({\bf X}))\}_{n\geq 4}$ avoids a prime $p$ if and only if $p$ does not divide any of the following numbers
\begin{equation}
\label{numberstodivide}
2^{2 m}+(-1)^m12,\,\, 2^{2 m}+(-1)^m8,\,\, 2^{2 m}+(-1)^m2,\,\, 2^{2 m}-(-1)^m2
\end{equation}
for every positive integer $m$.

The proof now follows the path of less resistance.  Recall that 
\begin{eqnarray*}
\left(\frac{2}{p}\right)&=&\begin{cases}
\hfill 1 & \text{if } p\equiv 1,7,17,23 \mod 24 \\
\hfill -1& \text{if } p\equiv 5,11,13,19 \mod 24
\end{cases}\\
\left(\frac{-2}{p}\right)&=&\begin{cases}
 \hfill 1 & \text{if } p\equiv 1,11,17,19 \mod 24 \\
 \hfill-1& \text{if } p\equiv 5,7,13,23 \mod 24
\end{cases}\\
\left(\frac{3}{p}\right)&=&\begin{cases}
\hfill 1 & \text{if } p\equiv 1,11,13,23 \mod 24 \\
\hfill -1& \text{if } p\equiv 5,7,17,19 \mod 24
\end{cases}\\
\left(\frac{-3}{p}\right)&=&\begin{cases}
 \hfill 1 & \text{if } p\equiv 1,7,13,19 \mod 24 \\
 \hfill-1& \text{if } p\equiv 5,11,17,23 \mod 24.
\end{cases}
\end{eqnarray*}
Therefore, if $p\equiv 5 \mod 24$, then $\pm 2, \pm 3$ are quadratic non-residues.  This implies that $p$ does not divide any of the numbers in (\ref{numberstodivide}).  Since
\begin{equation}
\delta(\{p \text{ prime}\,|\, p\equiv 5 \mod 24\}) = \frac{1}{\varphi(24)}=\frac{1}{8},
\end{equation}
then the result follows.
\end{proof}

The cases of degree 4 and beyond have not been studied in detail.  However, experimentations lead the authors to believe that they are harder to tackle (as expected).   As an example, consider the 
perturbation $\sigma_{n,4}+F({\bf X})$ where $F({\bf X}) =X_1X_2$.  The set $Av(4;F({\bf X}))$ appears to be infinite.  In fact, it looks like it contains an infinite amount of primes of the form 
$8q+1$.  For instance, the following primes, which are of the form $8q+1$, are members of $Av(4;F({\bf X})):$
$$17, 73, 89, 97, 113, 193, 233, 241, 257, 281, 337, 401, 433, 449.$$
At the moment of writing this manuscript, the authors did not see any pattern on them.

This section gave some insights of the set $Av(k_1,\cdots,k_s)$.  This set is important from the Boolean functions point of view.  In the literature, however, it is common to study the set of primes 
that divide some term of an integer sequence, in other words, the complement of $Av(k_1,\cdots,k_s)$.  Because of this, a short study for the set $P(k_1,\cdots, k_s)$ is presented in the next section.

\section{The set $P(k_1,\cdots,k_s)$}
\label{sectionP}

This work finishes with a study of set $P(k_1,\cdots,k_s)$, i.e. the set of primes that divide some term of the sequence $\{S(\sigma_{n,[k_1,\cdots,k_s]})\}$.  It has been stated that P{\'o}lya's result 
does not guarantee that $P(k_1,\cdots,k_s)$ is infinite.  In fact, Theorem \ref{pert2} shows that for some perturbations of the form $\sigma_{n,2}+F({\bf X})$, the set of primes that divide some term of 
the sequence $\{S(\sigma_{n,2}+F({\bf X}))\}$ is finite. The reality is, however, that at least for elementary symmetric Boolean functions this set is infinite.  The idea behind the approach to prove 
this claim is very simple: to identify the odd primes $p$ with the property that $p|S(\sigma_{p,[k_1,\cdots,k_s]})$, i.e. odd primes $p$ that divide the $p$-th term of the sequence (it is not hard to see 
that 2 always divides $S(\sigma_{2,[k_1,\cdots,k_s]})$ and so only odd primes are of interest).  For a general integer sequence, this might not be an easy task, however the representation of 
$S(\sigma_{n,[k_1,\cdots,k_s]})$ as the binomial sum
\begin{equation}
S(\sigma_{n,[k_1,\cdots,k_s]})=\sum_{j=0}^n (-1)^{\binom{j}{k_1}+\cdots+\binom{j}{k_s}}\binom{n}{j}
\end{equation}
simplifies the efforts.  Indeed, if $n=p$ is an odd prime, then 
\begin{equation}
S(\sigma_{p,[k_1,\cdots,k_s]})=\sum_{j=0}^p (-1)^{\binom{j}{k_1}+\cdots+\binom{j}{k_s}}\binom{p}{j}.
\end{equation}
Since $p|\binom{p}{j}$ for every $1\leq j\leq p-1$, then it is clear that $p|S(\sigma_{p,[k_1,\cdots,k_s]})$ if and only if $(-1)^{\binom{p}{k_1}+\cdots+\binom{p}{k_1}}=-1$.
It is a known that the sequence $$\left\{\binom{j}{k_1}+\cdots+\binom{j}{k_s}\mod 2\right\}_{j\geq 0}$$ is periodic.  To be specific, if $r=\lfloor\log_2(k_s)\rfloor+1$, then
\begin{equation}
\binom{j+m\cdot 2^r}{k_1}+\cdots+\binom{j+m\cdot 2^r}{k_s}\equiv \binom{j}{k_1}+\cdots+\binom{j}{k_s} \mod 2
\end{equation}
for every integer $m$.   Let $j_1,\cdots, j_t$ be all integers in $\{0,1,\cdots, 2^r-1\}$ with the property that $\binom{j}{k_1}+\cdots+\binom{j}{k_s}$ is odd.  Assume for the moment that at least one 
of $j_1,\cdots, j_t$ is an odd integer, say $j_o$ is such number.  Every integer of the form $j_o+m\cdot 2^r$ satisfies
\begin{equation}
(-1)^{\binom{j_o+m\cdot 2^r}{k_1}+\cdots+\binom{j_o+m\cdot 2^r}{k_s}}=-1,
\end{equation}
and the classical result of Dirichlet implies that there are an infinite amount of primes of the form $j_o+m\cdot 2^r$.  Of course, if every integer in $j_1,\cdots, j_t$ is even, then every odd prime $p$ satisfies $p\nmid S(\sigma_{p,[k_1,\cdots,k_s]})$ because $S(\sigma_{p,[k_1,\cdots,k_s]})\equiv 2\mod p$.  This information is summarized in the following results.

\begin{theorem}
\label{thmP}
Let $1\leq k_1<\cdots<k_s$ be integers.  Let $j_1,\cdots, j_t$ be all integers in $\{0,1,\cdots, 2^r-1\}$ with the property that $$\binom{j}{k_1}+\cdots+\binom{j}{k_s}$$ is odd.  Then, there is at least one odd prime $p$ such that $p|S(\sigma_{p,[k_1,\cdots,k_s]})$  if and only if at least one of the integers $j_1,\cdots, j_t$ is odd.  Moreover, if $j_{o_1},\cdots,j_{o_f}$ are the odd integers in  $j_1,\cdots, j_t$, then the odd primes $p$ with the property $p|S(\sigma_{p,[k_1,\cdots,k_s]})$ are precisely the primes in the arithmetic progressions
$$j_{o_1}+m\cdot 2^r,\cdots, j_{o_f}+m\cdot 2^r,$$
where $r=\lfloor\log_2(k_s)\rfloor +1$. 
\end{theorem}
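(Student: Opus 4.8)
The plan is to evaluate $S(\sigma_{p,[k_1,\cdots,k_s]})$ modulo $p$ directly from its binomial sum representation and thereby reduce the divisibility condition to a congruence on $p \bmod 2^r$. First I would specialize the defining identity to $n=p$ an odd prime,
\begin{equation}
S(\sigma_{p,[k_1,\cdots,k_s]})=\sum_{j=0}^p (-1)^{\binom{j}{k_1}+\cdots+\binom{j}{k_s}}\binom{p}{j}.
\end{equation}
Since $p\mid \binom{p}{j}$ for every $1\leq j\leq p-1$, reducing modulo $p$ annihilates all the interior terms and leaves only the contributions from $j=0$ and $j=p$. Because $k_i\geq 1$ forces $\binom{0}{k_i}=0$, the $j=0$ term contributes $1$, while the $j=p$ term contributes $(-1)^{\binom{p}{k_1}+\cdots+\binom{p}{k_s}}$. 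Hence
\begin{equation}
S(\sigma_{p,[k_1,\cdots,k_s]})\equiv 1+(-1)^{\binom{p}{k_1}+\cdots+\binom{p}{k_s}} \pmod p.
\end{equation}

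Next I would observe that the right-hand side is $0$ modulo $p$ precisely when the exponent $\binom{p}{k_1}+\cdots+\binom{p}{k_s}$ is odd, since otherwise it equals $2$, which is nonzero for an odd prime. Thus the divisibility $p\mid S(\sigma_{p,[k_1,\cdots,k_s]})$ is governed entirely by the parity of this sum of binomial coefficients, and the crux becomes controlling that parity as a function of $p$. Here I would invoke the periodicity of the sequence $\{\binom{j}{k_1}+\cdots+\binom{j}{k_s}\bmod 2\}_{j\geq 0}$ already recorded before the statement: because every $k_i\leq k_s<2^r$, Lucas' theorem gives $\binom{j+m\cdot 2^r}{k_i}\equiv \binom{j}{k_i}\pmod 2$ (adding $m\cdot 2^r$ leaves the lowest $r$ binary digits of $j$ untouched, and the degrees $k_i$ have no higher digits). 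Consequently the parity of the full sum depends only on $j\bmod 2^r$, so $p\mid S(\sigma_{p,[k_1,\cdots,k_s]})$ holds if and only if the residue $p\bmod 2^r$ lies among the integers $j_1,\cdots,j_t$ at which the sum is odd.

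Finally I would combine this congruence characterization with the elementary fact that an odd prime has odd residue modulo $2^r$. Therefore an odd prime can meet the condition only through an \emph{odd} $j_i$: if every $j_i$ is even then for every odd prime the exponent is even and $S(\sigma_{p,[k_1,\cdots,k_s]})\equiv 2\pmod p$, so no such prime exists, which proves one direction of the equivalence. For the converse and the explicit description, each odd $j_{o_i}$ satisfies $\gcd(j_{o_i},2^r)=1$, so Dirichlet's theorem on primes in arithmetic progressions supplies (infinitely many, hence at least one) primes $p\equiv j_{o_i}\pmod{2^r}$, each dividing its own $p$-th term, while conversely any odd prime with this divisibility property must satisfy $p\equiv j_{o_i}\pmod{2^r}$ for one of the odd residues. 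I do not anticipate a serious obstacle: the only ingredient demanding care is the periodicity modulo $2^r$ of the parity sequence, which rests on Lucas' theorem together with the bound $k_s<2^r$ built into $r=\lfloor\log_2(k_s)\rfloor+1$; everything else reduces to the two displayed congruences and a direct appeal to Dirichlet.
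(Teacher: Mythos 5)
Your proposal is correct and takes essentially the same route as the paper: the paper's proof of Theorem \ref{thmP} is precisely the discussion preceding its statement, namely reducing the binomial sum modulo $p$ to $1+(-1)^{\binom{p}{k_1}+\cdots+\binom{p}{k_s}}$, invoking the Lucas-theorem periodicity of the parity modulo $2^r$, and appealing to Dirichlet's theorem for the existence direction. The details you add (the $j=0$ term contributing $1$, the fact that an odd $j_{o_i}$ gives $\gcd(j_{o_i},2^r)=1$, and that an odd prime has odd residue modulo $2^r$) simply make explicit steps the paper leaves implicit.
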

\begin{proof}
This is a direct consequence of the previous discussion.
\end{proof}

\begin{corollary}
Let $1\leq k_1<\cdots<k_s$ be integers.  Let $j_1,\cdots, j_t$ be all integers in $\{0,1,\cdots, 2^r-1\}$ with the property that $$\binom{j}{k_1}+\cdots+\binom{j}{k_s}$$ is odd.   Let $b(k_1,\cdots,k_s)$ the number of odd integers in $j_1,\cdots, j_t$  and assume that $b(k_1,\cdots,k_s)>0$.  Then $P(k_1,\cdots,k_s)$ is an infinite set because it contains a subset of density $$\frac{b(k_1,\cdots,k_s)}{2^{\lfloor \log_2(k_s) \rfloor}}$$
within the set of primes.
\end{corollary}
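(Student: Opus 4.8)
The plan is to leverage Theorem \ref{thmP} together with Dirichlet's theorem on primes in arithmetic progressions. First I would invoke Theorem \ref{thmP} to exhibit, inside $P(k_1,\cdots,k_s)$, an explicit subset: the odd primes $p$ satisfying $p \mid S(\sigma_{p,[k_1,\cdots,k_s]})$. By that theorem, since $b(k_1,\cdots,k_s)>0$, these primes are exactly the primes lying in one of the $f := b(k_1,\cdots,k_s)$ arithmetic progressions $j_{o_1}+m\cdot 2^r,\cdots,j_{o_f}+m\cdot 2^r$, where $r=\lfloor\log_2(k_s)\rfloor+1$ and $j_{o_1},\cdots,j_{o_f}$ are the odd members of $j_1,\cdots,j_t$. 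Denote by $Q$ this set of primes; clearly $Q\subseteq P(k_1,\cdots,k_s)$, so it suffices to estimate the density of $Q$.

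Next I would compute that density. Each residue $j_{o_i}$ is odd, hence coprime to $2^r$, so the classical density result quoted just before Theorem \ref{thmP} applies to each single progression $j_{o_i}+m\cdot 2^r$: the set of primes it contains has density $1/\varphi(2^r)$ within the set of primes (and in particular is infinite by Dirichlet). Since $\varphi(2^r)=2^{r-1}$ and $r-1=\lfloor\log_2(k_s)\rfloor$, this equals $1/2^{\lfloor\log_2(k_s)\rfloor}$.

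Finally I would assemble the $f$ progressions. The residues $j_{o_1},\cdots,j_{o_f}$ are pairwise distinct elements of $\{0,1,\cdots,2^r-1\}$, so the corresponding progressions are pairwise disjoint; hence the density of their union is the sum of the individual densities, giving
\begin{equation}
\delta(Q)=f\cdot\frac{1}{2^{\lfloor\log_2(k_s)\rfloor}}=\frac{b(k_1,\cdots,k_s)}{2^{\lfloor\log_2(k_s)\rfloor}}.
\end{equation}
As $b(k_1,\cdots,k_s)>0$, this density is positive, so $Q$ is infinite, and therefore $P(k_1,\cdots,k_s)\supseteq Q$ is infinite as well.

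The argument is essentially immediate once Theorem \ref{thmP} is available, so there is no serious obstacle. The only points requiring a moment of care are the bookkeeping identity $\varphi(2^r)=2^{r-1}=2^{\lfloor\log_2(k_s)\rfloor}$ and the additivity of natural density over the disjoint residue classes; the latter rests solely on the fact that distinct $j_{o_i}$ determine genuinely disjoint progressions modulo $2^r$, so no inclusion-exclusion correction is needed.
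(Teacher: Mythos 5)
Your proposal is correct and follows exactly the route the paper intends: its proof of this corollary is simply the observation that it is a direct consequence of Theorem \ref{thmP} together with Dirichlet's theorem, and your write-up fills in the same density bookkeeping (each progression $j_{o_i}+m\cdot 2^r$ with $j_{o_i}$ odd contributes density $1/\varphi(2^r)=1/2^{\lfloor\log_2(k_s)\rfloor}$, and the $b(k_1,\cdots,k_s)$ distinct residues give disjoint progressions whose densities add).
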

\begin{proof}
This is a consequence of the statement of Theorem \ref{thmP} and the classical result of Dirichlet.
\end{proof}

In the case of the elementary symmetric Boolean function of degree $k$, the set $P(k)$ is infinite.

\begin{corollary}
Suppose that $k$ is a natural number.  The set $P(k)$ is infinite. 
\end{corollary}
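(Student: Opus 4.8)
The plan is to reduce the statement to the preceding corollary, which guarantees that $P(k)$ is infinite as soon as the quantity $b(k)$ is positive. Specializing the notation of that corollary to a single degree ($s=1$), $b(k)$ counts the odd integers $j$ in $\{0,1,\dots,2^r-1\}$, with $r=\lfloor\log_2 k\rfloor+1$, for which $\binom{j}{k}$ is odd. Thus the whole task reduces to the purely combinatorial one of exhibiting at least one such odd $j$, after which the corollary supplies an arithmetic progression of primes (via Dirichlet) inside $P(k)$ and hence the infinitude.

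To produce such a $j$ I would invoke Lucas' theorem, which states that $\binom{j}{k}$ is odd precisely when $k$ is a \emph{submask} of $j$, i.e. every binary digit equal to $1$ in $k$ occupies a position where $j$ also has a $1$. The idea is then to build the smallest odd integer having $k$ as a submask: starting from $k$, set its least significant bit. Concretely, take $j=k$ when $k$ is odd and $j=k+1$ when $k$ is even.

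First I would verify that this $j$ is odd, which is immediate since forcing the low bit produces an odd number. Next I would check that $\binom{j}{k}$ is odd: when $k$ is odd we have $\binom{k}{k}=1$, and when $k$ is even the integer $k+1$ shares all of the higher binary digits of $k$ and adds a $1$ in the units place, so $k$ is a submask of $k+1$ and Lucas' theorem applies. Finally I would confirm that $j$ lies in the admissible range $\{0,\dots,2^r-1\}$: when $k$ is odd this holds because $k<2^r$, and when $k$ is even we have $k\leq 2^r-2$, whence $j=k+1\leq 2^r-1$. Together these three checks give $b(k)\geq 1$, and the preceding corollary then yields that $P(k)$ is infinite.

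I do not anticipate a serious obstacle here; the substance of the argument is the elementary observation that every $k$ admits an odd integer below $2^r$ containing its binary support. The only point demanding a little care is the range check in the even case, which is exactly why one needs the bound $k\leq 2^r-2$ coming from $k<2^r$ together with the parity of $k$. It is worth remarking that when $k$ is a power of two the conclusion is trivial in any event, since such functions possess genuinely balanced instances $S(\sigma_{n,k})=0$ and therefore $P(k)$ is the set of all primes; the construction above nonetheless handles this case uniformly with the rest.
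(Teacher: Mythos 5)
Your proof is correct and takes essentially the same route as the paper: both arguments rest on Lucas' theorem combined with Theorem \ref{thmP} (through the preceding corollary and Dirichlet's theorem on primes in the progressions $j+m\cdot 2^r$ with $j$ odd). The only real difference is that you exhibit the single explicit witness $j=k$ ($k$ odd) or $j=k+1$ ($k$ even), which lets you treat powers of two uniformly, whereas the paper dispatches $k=2^l$ as trivial and then counts \emph{all} admissible odd $j$ — there are $2^{r-w_2(k)}$ or $2^{r-w_2(k+1)}$ of them — thereby obtaining the extra density conclusion $1/2^{w_2(\bar{k})-1}$, which the bare infinitude statement does not require.
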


\begin{proof}
The statement is clearly true if $k$ is a power of 2, therefore suppose that $k\neq 2^l$ for some integer $l$.  The idea is to identify the set of primes in Theorem \ref{thmP} for such $k$.  

Let $k=2^{a_1}+\cdots+2^{a_e}$ be the 2-adic expansion of $k$.   Lucas' Theorem implies that $j$ is such that $\binom{j}{k}$ is odd if and only if 
\begin{equation}
\label{jfork}
j=k+\sum_{2^i \notin\{2^{a_1},\cdots,2^{a_e}\}} \delta_i 2^i
\end{equation}
where $\delta_i \in \{0,1\}$.  By Theorem \ref{thmP}, one must identify the odd $j$'s of the form (\ref{jfork}) that lie in the set $\{1,2,3,\cdots, 2^r-1\}$, where $r=\lfloor \log_2(k) \rfloor+1$.  
If $k$ is odd, it is not hard to see that there are $2^{r-w_2(k)}$ of such $j$, where $w_2(n)$ represents the sum of the binary digits of $n$.  If $k$ is even, then the total is $2^{r-w_2(k+1)}$.  
In either case, the set $P(k)$ is infinite.  Moreover, making the transformation $\bar{k}=2\lfloor k/2\rfloor+1$, one sees that $P(k)$ is infinite because it contains a subset of density 
$1/2^{w_2(\bar{k})-1}$ within the set of primes.
\end{proof}

\begin{example}
Consider the case $k_1=3, k_2=4, k_3=7$ and $k_4=9$.  The integers $j$'s in $\{1,2,\cdots, 15\}$ for which
\begin{equation}
\binom{j}{3}+\binom{j}{4}+\binom{j}{7}+\binom{j}{9}
\end{equation}
is odd are $3, 4, 5, 6, 7, 9, 12,$ and  $14$.  Theorem \ref{thmP} implies that all primes $p$ in the arithmetic progressions 
$$3+ 16m, \,\,5+16m,\,\, 7+16m, \,\,9+16m$$
satisfy $p|S(p,[3,4,7,9])$.  Therefore, $P(3,4,7,9)$ is infinite because it contains a subset of density $$4/2^{\lfloor \log_2(9) \rfloor}=4/8=1/2$$ within the set of all primes.
\end{example}

Theorem \ref{thmP} identifies the condition on the degrees $k_1,\cdots, k_s$ for the existence of odd primes $p$ with the property $p|S(p,[k_1,\cdots,k_s])$.  A natural question to consider is to identify 
the symmetric Boolean functions for which Theorem \ref{thmP} fails to apply.  These are symmetric Boolean functions $\sigma_{n,[k_1,\cdots,k_s]}$ with the property that every integer $j$ for which 
\begin{equation}
\binom{j}{k_1}+\cdots+\binom{j}{k_s}
\end{equation}
is odd is an even integer.  An example of such Boolean function is $\sigma_{n,[4,5]}$ and the reason the theorem fails is very simple to explain. Lucas' Theorem implies that every time $\binom{j}{5}$ is odd, so is $\binom{j}{4}$.  This means that $\binom{j}{4}+\binom{j}{5}$ is odd precisely when $\binom{j}{4}$ is odd and $\binom{j}{5}$ is even, but this occurs when $j=4+8m$ or $j=6+8m$ for $m$ non-negative integer.  The same argument applies to $\sigma_{n,[2,3]}$, $\sigma_{n,[6,7]}$ and every Boolean function of the form $\sigma_{n,[2l,2l+1]}.$  Also, any combination of them have the same property, for example,  $p\nmid S(\sigma_{p,[2,3,6,7]})$ for every odd prime.  The reader can convince himself/herself that Theorem \ref{thmP} does not apply to a symmetric Boolean function if and only if it is a combination of terms of the form $\sigma_{n,[2l,2l+1]}$.  

Theorem \ref{thmP} provides a sufficient condition for $P(k_1,\cdots, k_s)$ to be infinite and the above discussion characterizes all symmetric Boolean functions for which Theorem \ref{thmP} fails to apply.  The amount of these ``bad" symmetric Boolean functions increases exponentially as the degree grows.  To be specific, if all symmetric Boolean functions of degree $k_s\leq M$ are considered, then $2^{\lfloor (M-1)/2 \rfloor}-1$ of them are such that Theorem \ref{thmP} fails to apply.  However, it is also true that this number is exponentially smaller than the total amount of all symmetric Boolean functions of degree less than or equal to $M$.  For example, consider all symmetric Boolean functions of degree less than or equal to 15.  There is a total of $32767$ of such Boolean functions, but only 127 of them are such that $p\nmid S(\sigma_{p,[k_1,\cdots,k_s]})$ for every odd prime $p$.  Note that
\begin{equation}
1-\frac{127}{32767}\approx 0.9961241493,
\end{equation}
which means that Theorem \ref{thmP} can be used to identify $P(k_1,\cdots,k_s)$ as an infinite set for 99.61\% of them.  Of course, the other 0.39\% of the sets $P(k_1,\cdots,k_s)$ may still be infinite.  For example, consider the case of $\sigma_{n,[3,2]}$.  By Theorem \ref{thmP}, $p\nmid S(\sigma_{p,[3,2]})$ for every odd prime $p$, yet the set $P(3,2)$ is infinite.  The reason for this is that $p\in Av(3,2)$ if and only if $\ord_p(4)\equiv 2 \mod 4.$  However, if $p\equiv 3 \mod 4$, then $\ord_p(4)$ is odd and so $p\in P(3,2)$, which clearly implies that $P(3,2)$ is an infinite set.

\medskip \medskip
\noindent
{\bf Acknowledgments.} The authors would like to thank Professor Thomas W. Cusick for reading a previous version of this article.  His comments and suggestions improve the presentation of this work.
The second author acknowledges the partial support of UPR-FIPI 1890015.00. \\



\bibliographystyle{plain}

\vfill

\appendix
\section{Table containing $\mathfrak{p}(k)$ for $5\leq k\leq 263$}
\label{app:table}

$$
\begin{array}{|c|c|c|c|c|c|c|c|c|c|c|c|}
\hline
k&  5 & 6 & 7 & 9 & 10 & 11 & 12 & 13 & 14 & 15 & 17 \\
\mathfrak{p}(k) & 17 & 41 & 73 & 97 & 17 & 17 & 17 & 17 & 17 & 17 & 1601 \\
\hline
k&18 & 19 & 20 & 21 & 22 & 23 & 24 & 25 & 26 & 27 & 28 \\
\mathfrak{p}(k) & 97 & 97 & 97 & 449 & 257 & 97 & 97 & 97 & 97 & 193 & 257 \\
\hline
k& 29 & 30 & 31 & 33 & 34 & 35 & 36 & 37 & 38 & 39 & 40 \\
\mathfrak{p}(k) & 97 & 97 & 97 & 449 & 193 & 1409 & 193 & 193 & 193 & 257 & 193 \\
\hline
k& 41 & 42 & 43 & 44 & 45 & 46 & 47 & 48 & 49 & 50 & 51 \\
\mathfrak{p}(k) & 449 & 769 & 257 & 193 & 449 & 257 & 193 & 193 & 193 & 193 & 257 \\
\hline
k& 52 & 53 & 54 & 55 & 56 & 57 & 58 & 59 & 60 & 61 & 62 \\
\mathfrak{p}(k) & 449 & 193 & 193 & 193 & 257 & 449 & 257 & 257 & 257 & 449 & 641 \\
\hline
k& 63 & 65 & 66 & 67 & 68 & 69 & 70 & 71 & 72 & 73 & 74 \\
\mathfrak{p}(k) & 193 & 1153 & 257 & 769 & 641 & 257 & 769 & 641 & 257 & 641 & 769 \\
\hline
k& 75 & 76 & 77 & 78 & 79 & 80 & 81 & 82 & 83 & 84 & 85 \\
\mathfrak{p}(k) & 1409 & 257 & 641 & 257 & 257 & 257 & 641 & 257 & 641 & 257 & 257 \\
\hline
k& 86 & 87 & 88 & 89 & 90 & 91 & 92 & 93 & 94 & 95 & 96 \\
\mathfrak{p}(k) & 257 & 3329 & 2689 & 2689 & 641 & 257 & 257 & 257 & 257 & 257 & 257 \\
\hline
k& 97 & 98 & 99 & 100 & 101 & 102 & 103 & 104 & 105 & 106 & 107 \\
\mathfrak{p}(k) & 769 & 769 & 257 & 769 & 641 & 1153 & 257 & 1153 & 257 & 769 & 641 \\
\hline
k& 108 & 109 & 110 & 111 & 112 & 113 & 114 & 115 & 116 & 117 & 118 \\
\mathfrak{p}(k) & 257 & 3329 & 257 & 257 & 257 & 641 & 3329 & 257 & 257 & 641 & 257 \\
\hline
k& 119 & 120 & 121 & 122 & 123 & 124 & 125 & 126 & 127 & 129 & 130 \\
\mathfrak{p}(k) & 641 & 257 & 257 & 257 & 769 & 257 & 1153 & 257 & 641 & 769 & 769 \\
\hline
k& 131 & 132 & 133 & 134 & 135 & 136 & 137 & 138 & 139 & 140 & 141 \\
\mathfrak{p}(k) & 257 & 769 & 3329 & 3329 & 257 & 7681 & 7937 & 257 & 257 & 3329 & 769 \\
\hline
k& 142 & 143 & 144 & 145 & 146 & 147 & 148 & 149 & 150 & 151 & 152 \\
\mathfrak{p}(k) & 257 & 257 & 257 & 7681 & 257 & 257 & 7937 & 257 & 3329 & 257 & 257 \\
 \hline
k& 153 & 154 & 155 & 156 & 157 & 158 & 159 & 160 & 161 & 162 & 163 \\
\mathfrak{p}(k) & 7681 & 257 & 257 & 257 & 257 & 257 & 257 & 257 & 3329 & 9473 & 769 \\
  \hline
k& 164 & 165 & 166 & 167 & 168 & 169 & 170 & 171 & 172 & 173 & 174 \\
\mathfrak{p}(k) &  257 & 7937 & 3329 & 257 & 769 & 257 & 257 & 257 & 257 & 257 & 257 \\
  \hline
k& 175 & 176 & 177 & 178 & 179 & 180 & 181 & 182 & 183 & 184 & 185 \\
\mathfrak{p}(k) & 257 & 7937 & 3329 & 257 & 257 & 257 & 257 & 257 & 769 & 257 & 257 \\
 \hline
k&  186 & 187 & 188 & 189 & 190 & 191 & 192 & 193 & 194 & 195 & 196 \\
\mathfrak{p}(k) & 257 & 257 & 257 & 257 & 257 & 257 & 257 & 769 & 257 & 257 & 257 \\
 \hline
k&  197 & 198 & 199 & 200 & 201 & 202 & 203 & 204 & 205 & 206 & 207 \\
\mathfrak{p}(k) & 7937 & 10753 & 257 & 257 & 769 & 257 & 257 & 257 & 257 & 257 & 257 \\
 \hline
k&  208 & 209 & 210 & 211 & 212 & 213 & 214 & 215 & 216 & 217 & 218 \\
\mathfrak{p}(k) & 257 & 3329 & 257 & 257 & 257 & 257 & 257 & 9473 & 257 & 257 & 257 \\
 \hline 
k&  219 & 220 & 221 & 222 & 223 & 224 & 225 & 226 & 227 & 228 & 229 \\
\mathfrak{p}(k) & 257 & 257 & 257 & 257 & 257 & 257 & 3329 & 257 & 257 & 257 & 769 \\
 \hline 
 k&  230 & 231 & 232 & 233 & 234 & 235 & 236 & 237 & 238 & 239 & 240 \\
\mathfrak{p}(k) & 257 & 257 & 257 & 257 & 257 & 257 & 257 & 257 & 257 & 257 & 257 \\
 \hline 
 k&  241 & 242 & 243 & 244 & 245 & 246 & 247 & 248 & 249 & 250 & 251 \\
 \mathfrak{p}(k) & 7937 & 769 & 257 & 257 & 257 & 257 & 257 & 257 & 257 & 257 & 257 \\
 \hline 
 k&  252 & 253 & 254 & 255 & 257 &258  & 259 & 260  & 261 & 262 & 263  \\
 \mathfrak{p}(k) & 257 & 257& 257 & 257 & 7681 & 7681  &  7681 &  15361 & 19457  & 7681 & 10753  \\
 \hline

\end{array}
$$

\end{document}